\def\Col{\operatorname{Col}}
\newcommand{\claimproofend}{\hspace*{.1mm}\hspace{\fill}}
\begin{document}

\begin{frontmatter}

\titledata{Smallest snarks with oddness 4 and \\cyclic connectivity 4 have order 44}{}           

\authordata{Jan Goedgebeur}            
{Department of Applied Mathematics, Computer Science \&
Statistics, Ghent University, Krijgslaan 281-S9, 9000 Ghent, Belgium\\
Computer Science Department, University of Mons, \\Place du Parc 20, 7000 Mons, Belgium}    
{jan.goedgebeur@ugent.be}                     
{}

\authordata{Edita M\' a\v cajov\' a}            
{Department of Computer Science, Comenius University, 842 48 Bratislava, Slovakia}    
{macajova@dcs.fmph.uniba.sk}
{}                                       

\authordata{Martin \v Skoviera}            
{Department of Computer Science, Comenius University, 842 48 Bratislava, Slovakia}    
{skoviera@dcs.fmph.uniba.sk}
{}                                       

\keywords{Cubic graph, cyclic connectivity, edge-colouring, snark, oddness, computation.}               
\msc{05C15, 05C21, 05C30, 05C40, 05C75, 68R10.}                       

\begin{abstract}

The family of snarks -- connected bridgeless cubic graphs that
cannot be $3$-edge-colour\-ed -- is well-known as a potential
source of counterexamples to several important and
long-standing conjectures in graph theory. These include the
cycle double cover conjecture, Tutte's 5-flow conjecture,
Fulkerson's conjecture, and several others. One way of
approaching these conjectures is through the study of
structural properties of snarks and construction of small
examples with given properties. In this paper we deal with the
problem of determining the smallest order of a nontrivial snark
(that is, one which is cyclically $4$-edge-connected and has
girth at least $5$) of oddness at least $4$. Using a
combination of structural analysis with extensive computations
we prove that the smallest order of a snark with oddness at
least $4$ and cyclic connectivity~$4$ is $44$. Formerly it was
known that such a snark must have at least 38 vertices [J.\
Combin.\ Theory Ser.\ B 103 (2013), 468--488] and one such
snark on 44 vertices was constructed by Lukot\!'ka et al.
[Electron.\ J.\ Combin.\ 22 (2015), $\#$P1.51]. The proof
requires determining all cyclically 4-edge-connected snarks on
36 vertices, which extends the previously compiled list of all
such snarks up to 34 vertices [J.\ Combin.\ Theory Ser.\ B,
loc.\ cit.]. As a by-product, we use this new list to test the
validity of several conjectures where snarks can be smallest
counterexamples.
\end{abstract}

\end{frontmatter}

\section{Introduction}\label{sec:intro}

Snarks are an interesting, important, but somewhat mysterious
family of cubic graphs whose characteristic property is that
their edges cannot be properly coloured with three colours.
Very little is known about the nature of snarks because the
reasons which cause the absence of $3$-edge-colourability in
cubic graphs are not well understood. Snarks are also difficult
to find because almost all cubic graphs are hamiltonian and
hence $3$-edge-colourable~\cite{RW}. On the other hand,
deciding whether a cubic graph is $3$-edge-colourable or not is
NP-complete~\cite{holyer}, implying that the family of snarks
is sufficiently rich.

The importance of snarks resides mainly in the fact that many
difficult conjectures in graph theory, such as Tutte's $5$-flow
conjecture or the cycle double cover conjecture, would be
proved in general if they could be established for snarks
\cite{J85, J88}. While most of these problems are trivial for
3-edge-colourable graphs, and exceedingly difficult for snarks
in general, they often become tractable for snarks that are in
a certain sense close to being $3$-edge-colourable.

There exist a number of measures of uncolourability of cubic
graphs (see~\cite{FMS-survey} for a recent survey). Among them,
the smallest number of odd circuits in a $2$-factor of a cubic
graph, known as \textit{oddness}, has received the widest
attention.  Note that the oddness of a cubic graph is an even
integer which equals zero precisely when the graph is
$3$-edge-colourable. It is known, for example, that the
$5$-flow conjecture and the Fan-Raspaud conjecture are true for
cubic graphs of oddness at most two~\cite{J88, MS:4-lines},
while the cycle double cover conjecture is known to hold for
cubic graphs of oddness at most $4$~\cite{hg,hk}. Snarks with
large oddness thus still remain potential counterexamples to
these conjectures and therefore merit further study.

\begin{figure}[htbp]
	\centering
	\includegraphics[width=0.4\textwidth]{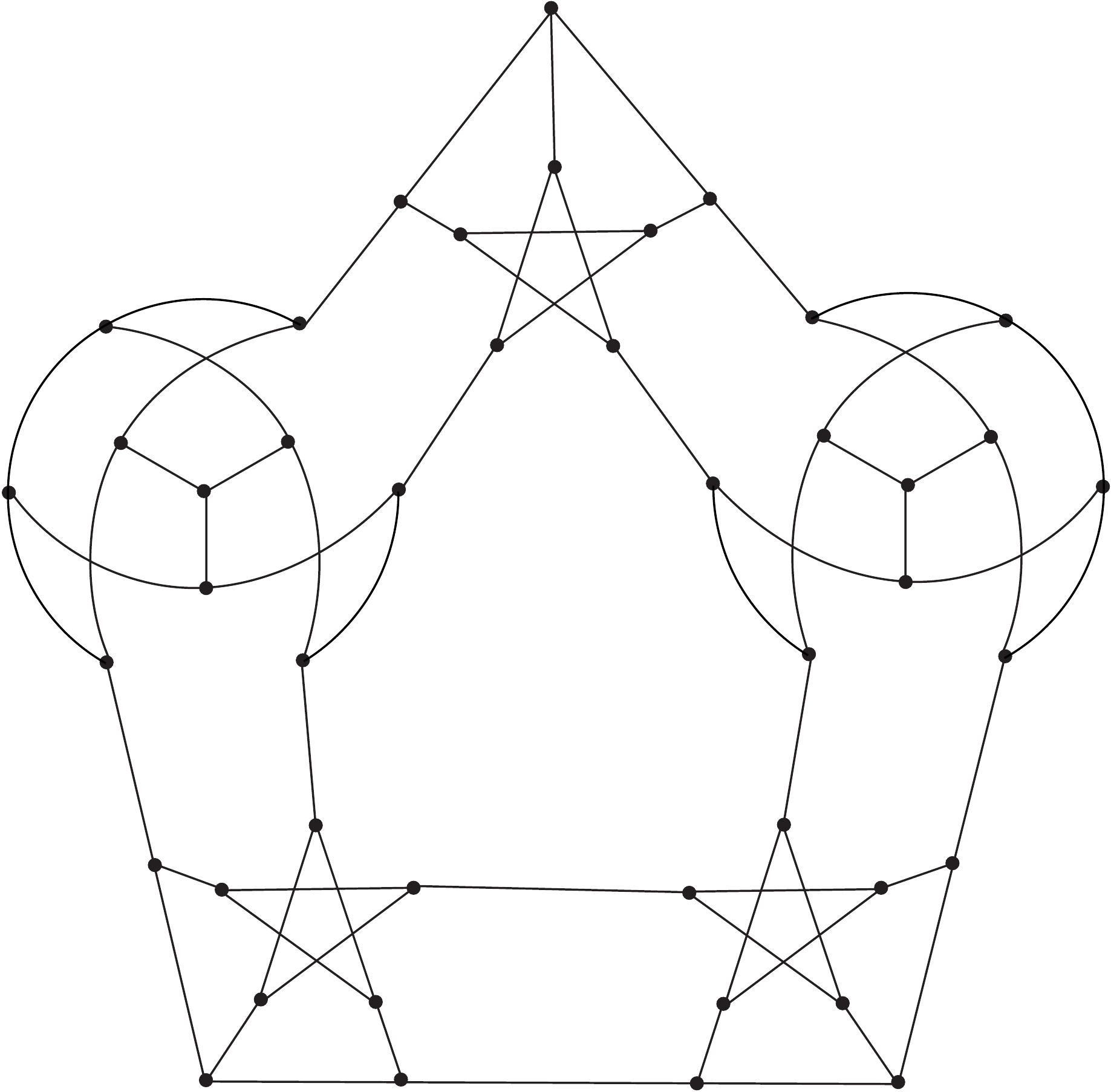}
  \caption{The smallest known nontrivial snark with oddness $\ge 4$.}
  \label{fig:LMMS44}
\end{figure}

Several authors have provided constructions of infinite
families of snarks with increasing oddness, see, for example,
\cite{hlund, Kochol:sup, LMMS, Steffen:meas}. Most of them
focus on snarks with cyclic connectivity at least $4$ and girth
at least $5$, because snarks that lack these two properties can
be easily reduced to smaller snarks. We call such snarks
\textit{nontrivial}. All currently available constructions
indicate that snarks of oddness greater than $2$ are extremely
rare. From~\cite[Observation~4.10]{BrGHM} it follows that there
exist no nontrivial snarks of oddness greater than $2$ on up to
36 vertices. The smallest known example of a nontrivial snark
with oddness at least $4$ has $44$ vertices and its oddness
equals $4$. It was constructed by  Lukot'ka et al.
in~\cite{LMMS}, superseding an earlier construction of
H\"agglund \cite{hlund} on 46 vertices; it is shown in
Figure~\ref{fig:LMMS44} in a form different from the one
displayed in \cite{LMMS}. In~\cite[Theorem~12]{LMMS} it is also
shown that if we allow trivial snarks, the smallest one with
oddness greater than $2$ has $28$ vertices and oddness $4$. As
explained in~\cite{G, LMMS-err}, there are exactly three such
snarks, one with cyclic connectivity $3$ and two with cyclic
connectivity $2$. (The latter result rectifies the false claim
made in~\cite{LMMS} that there are only two snarks of oddness 4
on 28 vertices.)

The aim of the present paper is to prove the following result.

\begin{restatable}{thm}{mainthm}\label{thm:main}
The smallest number of vertices of a snark with cyclic
connectivity $4$ and oddness at least $4$ is $44$. The girth of
each such snark is at least $5$.
\end{restatable}

This theorem bridges the gap between the order $36$ up to which
all nontrivial snarks have been generated (and none of oddness
greater than $2$ was found~\cite{BrGHM}) and the order $44$
where an example of oddness $4$ has been constructed
\cite{LMMS}. Since generating all nontrivial snarks beyond $36$
vertices seems currently infeasible, it would be hardly
possible to find a smallest nontrivial snark with oddness at
least $4$ by employing computational force alone. On the other
hand, the current state-of-the-art in the area of snarks, with
constructions significantly prevailing over structural
theorems, does not provide sufficient tools for a purely
theoretical proof of our theorem. Our proof is therefore an
inevitable combination of structural analysis of snarks with
computations.

The proof consists of two steps. First we prove that every
snark with oddness at least~$4$, cyclic connectivity~$4$, and
minimum number of vertices can be decomposed into two smaller
cyclically $4$-edge-connected snarks $G_1$ and $G_2$ by
removing a cycle-separating $4$-edge-cut, adding at most two
vertices to each of the components, and by restoring
$3$-regularity. Conversely, every such snark arises from two
smaller cyclically $4$-edge-connected snarks $G_1$ and $G_2$ by
the reverse process. In the second step of the proof we
computationally verify that no combination of $G_1$ and $G_2$
can  result in a cyclically $4$-edge-connected snark of oddness
at least $4$ on fewer than $44$ vertices. This requires
checking all suitable pairs of cyclically $4$-edge-connected
snarks on up to $36$ vertices, including those that contain
$4$-cycles. Such snarks have been previously generated only up
to order $34$~\cite{BrGHM}, which is why we had to additionally
generate all cyclically $4$-edge-connected snarks on $36$
vertices containing a $4$-cycle. This took about 80 CPU years
and yielded exactly $404~899~916$ cyclically $4$-edge-connected
snarks.

It is important to realise that Theorem~\ref{thm:main} does not
yet determine the order of a smallest nontrivial snark with
oddness at least $4$. The reason is that it does not exclude
the existence of cyclically $5$-connected snarks with oddness
at least $4$ on fewer than $44$ vertices. However, the smallest
currently known cyclically $5$-edge-connected snark with
oddness at least $4$ has $76$ vertices (see Steffen
\cite[Theorem~2.3]{Steffen:meas}), which indicates that
a cyclically $5$-edge-connected snark with oddness at least $4$ on fewer than 44 vertices
either does not exist or will be very difficult to find.

Our paper is organised as follows. Section~\ref{sec:prelim}
provides the necessary background material for the proof of
Theorem~\ref{thm:main} and for the results that precede it, in
particular for the decomposition theorems proved in
Section~\ref{sec:decomp}. In Section~\ref{sec:MainResult} we
employ these decomposition theorems to prove
Theorem~\ref{thm:main}. We further discuss this theorem in
Section~\ref{sec:remarks} where we also pose two related
problems. In the final section we report about the tests which
we have performed on the set of all cyclically
$4$-edge-connected snarks of order 36 concerning the validity
of several interesting conjectures in graph theory, such as the
dominating cycle conjecture, the total colouring conjecture,
and the Petersen colouring conjecture.

We will continue our investigation of the smallest snarks with
oddness at least $4$ and cyclic connectivity $4$ in the sequel
of this paper~\cite{GMS}. We will display a set of 31 such
snarks, analyse their properties, and prove that they constitute
the complete set of snarks with oddness at least $4$ and cyclic
connectivity $4$ on $44$ vertices.

\section{Preliminaries}\label{sec:prelim}

\textbf{2.1. Graphs and multipoles.} All graphs in this paper
are finite. For the sake of completeness, we have to permit
graphs containing multiple edges or loops, although these
features will in most cases be excluded by the imposed
connectivity or colouring restrictions.

Besides graphs we also consider graph-like structures, called
\textit{multipoles}, that may contain dangling edges and even
isolated edges. Multipoles serve as a convenient tool for
constructing larger graphs from smaller building blocks. They
also naturally arise as a result of severing one or several
edges of a graph, in particular edges forming an edge-cut. In
this paper all multipoles will be cubic (3-valent).

Every edge of a multipole has two ends and each end can, but
need not, be incident with a vertex. An edge which has both
ends incident with a vertex is called \textit{proper}. If one
end of an edge is incident with a vertex and the other is not,
then the edge is called a \textit{dangling edge} and, if
neither end of an edge is incident with a vertex, it is called
an \textit{isolated edge}. An end of an edge that is not
incident with a vertex is called a \textit{semiedge}. A
multipole with $k$ semiedges is called a \textit{$k$-pole}. Two
semiedges $s$ and $t$ of a multipole can be joined to produce
an edge $s*t$ connecting the end-vertices of the corresponding
dangling edges. Given two $k$-poles $M$ and $N$ with semiedges
$s_1, \ldots, s_k$ and $t_1,\ldots, t_k$, respectively, we
define their \textit{complete junction} $M*N$ to be the graph
obtained by performing the junctions $s_i*t_i$ for each
$i\in\{1,\ldots, k\}$. A \textit{partial junction} is defined
in a similar way except that a proper subset of semiedges of
$M$ is joined to semiedges of $N$. Partial junctions can be
used to construct larger multipoles from smaller ones. In
either case, whenever a junction of two multipoles is to be
performed, we assume that their semiedges are assigned a fixed
order. For a more detailed formal development of concepts
related to multipoles we refer the reader, for example,
to~\cite{Fiol:bool, MS:irred} or~\cite{ChS}.

\medskip\noindent
\textbf{2.2. Cyclic connectivity.} Let $G$ be a connected
graph. An \textit{edge-cut} of a graph~$G$, or just a
\textit{cut} for short, is any set $S$ of edges of $G$ such
that $G-S$ is disconnected. An edge-cut is said to be
\textit{trivial} if it consists of all edges incident with one
vertex, and \textit{nontrivial} otherwise. An important kind of
an edge-cut is a cocycle, which arises by taking a set of
vertices or an induced subgraph $H$ of $G$ and letting $S$ to
be the set $\delta_G(H)$ of all edges with exactly one end in
$H$. We omit the subscript $G$ whenever $G$ is clear from the
context.

An edge-cut is said to be \textit{cycle-separating} if at least
two components of $G-S$ contain cycles. We say that a connected
graph $G$ is \textit{cyclically $k$-edge-connected} if no set
of fewer than $k$ edges is cycle-separating in $G$. The
\textit{cyclic connectivity} of $G$, denoted by $\zeta(G)$,  is
the largest number $k\le\beta(G)$, where
$\beta(G)=|E(G)|-|V(G)|+1$ is the cycle rank of $G$, for which
$G$ is cyclically $k$-connected (cf.~\cite{NS:cc,R}).

It is not difficult to see that for a cubic graph $G$ with
$\zeta(G)\le 3$ the value $\zeta(G)$ coincides with the usual
vertex-connectivity or edge-connectivity of $G$. Thus cyclic
connectivity in cubic graphs is a natural extension of the
common versions of connectivity (which unlike cyclic
connectivity are bounded above by $3$). Another useful
observation is that the value of cyclic connectivity remains
invariant under subdivisions and adjoining new vertices of
degree~$1$.

The following well-known result \cite{NS:cc,R} relates
$\zeta(G)$ to the length of a shortest cycle in $G$, denoted by
$g(G)$ and called the \textit{girth} of $G$.

\begin{proposition}\label{prop:girth}
For every connected cubic graph $G$ we have $\zeta(G)\le g(G)$.
\end{proposition}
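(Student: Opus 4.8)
The plan is to show, for every connected cubic graph $G$, that a shortest cycle always supplies an edge-cut of exactly $g(G)$ edges, and then to argue that either this cut is cycle-separating (giving $\zeta(G)\le g(G)$ directly) or else the graph is so small that $\beta(G)\le g(G)$; since $\zeta(G)\le\beta(G)$ holds by definition, both outcomes yield the claim.

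First I would fix a shortest cycle $C$ in $G$, so that $|E(C)|=g(G)=:g$, and observe that $C$ is chordless: any chord would decompose $C$ into two shorter closed walks, the shorter of which would contain a cycle of length strictly less than $g$, contradicting minimality. Since $G$ is cubic and $C$ is chordless, every vertex of $C$ is incident with exactly one edge not lying on $C$, and that edge has its other end outside $V(C)$. Hence the cocycle $\delta_G(C)$ is a genuine edge-cut consisting of precisely $g$ edges; here I use that $V(G)\setminus V(C)\ne\emptyset$, which holds because a cycle is $2$-regular and therefore $G\ne C$.

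Next I would distinguish two cases according to whether $G-V(C)$ contains a cycle. If it does, then removing $\delta_G(C)$ leaves the component $C$ (which is a cycle) together with the components of $G-V(C)$, at least one of which contains a cycle; thus $\delta_G(C)$ is cycle-separating. As it has only $g$ edges, $G$ cannot be cyclically $k$-edge-connected for any $k>g$ (a set of fewer than $k$ edges is cycle-separating), and therefore $\zeta(G)\le g$. In the complementary case $G-V(C)$ is a forest $F$, and here a short count finishes the job: writing $n=|V(G)|$ and letting $c$ be the number of components of $F$, the forest on $n-g$ vertices has $(n-g)-c$ edges, so comparing the degree sum $3(n-g)$ over $V(F)$ with the contribution of the $g$ cut edges and the internal edges of $F$ gives $3(n-g)=g+2\bigl((n-g)-c\bigr)$, whence $n=2g-2c$ and $\beta(G)=n/2+1=g-c+1\le g$, using $c\ge 1$ since $F$ is nonempty. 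Combined with $\zeta(G)\le\beta(G)$ this again yields $\zeta(G)\le g(G)$.

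I expect the only points needing care to be the chordlessness of a shortest cycle and the elementary bookkeeping in the forest case; neither is a genuine obstacle. The single conceptual idea driving the whole argument is that a shortest cycle invariably cuts off exactly $g$ edges, and that the only way this cut can fail to separate two cycles is for the remainder to be acyclic, which in turn pins down the cycle rank.
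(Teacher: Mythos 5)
The paper does not prove this proposition at all --- it is quoted as a known result with a pointer to \cite{NS:cc,R} --- so there is no in-paper argument to compare yours against; I can only assess your proof on its own terms, and it is correct. The dichotomy you set up is exactly the right one: a shortest cycle $C$ is chordless, so in a cubic graph $\delta_G(C)$ consists of precisely $g(G)$ edges; if $G-V(C)$ contains a cycle this cocycle is cycle-separating and the definition of cyclic $k$-edge-connectivity immediately gives $\zeta(G)\le g(G)$, while if $G-V(C)$ is a forest your degree count correctly yields $n=2g-2c$ and hence $\beta(G)=g-c+1\le g$, at which point the cap $\zeta(G)\le\beta(G)$ built into the paper's definition finishes the job. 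That second branch is genuinely needed (e.g.\ $K_4$ and $K_{3,3}$ have no cycle-separating cut at all, and only the $\beta$-cap makes $\zeta$ finite there), and your bookkeeping in it checks out. The one caveat is that the paper formally permits loops and parallel edges, and for girth $1$ or $2$ your chordlessness and nonemptiness claims degenerate (for the theta graph $V(G)=V(C)$, and a loop is not handled by the ``two shorter closed walks'' argument); these residual cases are trivial to verify directly, but strictly speaking they fall outside your argument as written and deserve a sentence.
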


Let us observe that in a connected cubic graph every edge-cut
$S$ consisting of independent edges is cycle-separating: indeed
the minimum valency of $G-S$ is $2$, so each component of $G-S$
contains a cycle. Conversely, a cycle-separating edge-cut of
minimum size is easily seen to be independent; moreover, $G-S$
has precisely two components, called \textit{cyclic parts} or
\textit{fragments}. A fragment minimal under inclusion will be
called an \textit{atom}. A \textit{nontrivial atom} is any
atom different from a shortest cycle.

The following two propositions provide useful tools in handling
cyclic connectivity. The first of them follows easily by
mathematical induction. For the latter we refer the reader to
\cite[Proposition~4 and Theorem~11]{NS:cc}.

\begin{lemma}\label{lemma:pvmultipol}
Let $H$ be a connected acyclic subgraph of a cubic graph
separated from the rest by a $k$-edge-cut. Then $H$ has $k-2$
vertices.
\end{lemma}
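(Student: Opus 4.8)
The plan is to prove this by a simple edge-end counting (handshake-type) argument applied to the vertices of $H$, exploiting the two facts that $G$ is cubic and that $H$, being connected and acyclic, is a tree. Write $n = |V(H)|$. Since $H$ is a tree on $n$ vertices, it has exactly $n-1$ edges. The strategy is to count the edge-ends incident with vertices of $H$ in two different ways and equate the totals.

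On the one hand, every vertex of $H$ has degree $3$ in the cubic graph $G$, so the number of edge-ends incident with vertices of $H$ equals $3n$. On the other hand, each such edge-end belongs either to an edge of $G$ with both ends in $V(H)$ (an \emph{internal} edge) or to an edge with exactly one end in $V(H)$ (a \emph{cut} edge, i.e.\ an edge of $\delta_G(H)$). An internal edge contributes two edge-ends to the count, while a cut edge contributes exactly one. Here I would use the hypothesis that $H$ is separated from the rest of $G$ by a $k$-edge-cut: this means $\delta_G(H)$ has exactly $k$ edges, so there are $k$ cut edges, and moreover it guarantees that $H$ already contains every edge of $G$ joining two of its vertices, so the number of internal edges is precisely $|E(H)| = n-1$.

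Putting the two counts together yields the identity
\[
3n = 2(n-1) + k,
\]
which simplifies immediately to $n = k-2$, as claimed. The argument requires no induction and no case analysis; the only point needing care is the definitional one flagged above, namely confirming that the separating $k$-edge-cut is exactly $\delta_G(H)$ and hence that all edges of $G$ among the vertices of $H$ are internal to $H$ (so that the internal-edge count really is $n-1$ rather than something smaller). Once that bookkeeping is pinned down, the stated formula $|V(H)| = k-2$ follows directly from the displayed equation.
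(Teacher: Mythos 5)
Your proof is correct. The paper itself gives no written argument for this lemma; it only remarks that the statement ``follows easily by mathematical induction'' (the intended induction being on $|V(H)|$: peel off a leaf of the tree $H$, which turns a $k$-edge-cut into a $(k-1)$-edge-cut around a smaller tree). Your double count of edge-ends is a genuinely different and arguably cleaner route: since $G$ is cubic, the vertices of $H$ carry $3n$ edge-ends, and splitting these between internal edges ($2(n-1)$ ends, as $H$ is a tree) and cut edges ($k$ ends) gives $3n = 2(n-1)+k$, hence $n=k-2$ in one line, with no induction and with the roles of cubicity and acyclicity made explicit. You are also right to flag the single definitional point on which the count rests: ``separated from the rest by a $k$-edge-cut'' must be read as saying that the cut is exactly $\delta_G(H)$ and that $H$ contains every edge of $G$ joining two of its vertices, i.e.\ $H$ is a full component of $G-S$. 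This is precisely how the lemma is applied in the paper (for instance to the subgraphs $X_i\cap Y_j$ in the proof of Lemma~\ref{lemma:comparable2-cuts}, where $|\delta_G(X_i\cap Y_j)|=3$ forces a single vertex), so your bookkeeping matches the intended hypothesis and the proof is complete.
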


\begin{proposition}\label{prop:cc-properties}
Let $G$ be a connected cubic graph. The following statements
hold:
\begin{itemize}
\item[{\rm (i)}] Every fragment of $G$ is connected, and
    every atom is $2$-connected. Moreover, if $\zeta(G)\ge
    3$, then every fragment is $2$-connected.
\item[{\rm (ii)}] If $A$ is a nontrivial atom of $G$, then
    $\zeta(A)>\zeta(G)/2$.
\end{itemize}
\end{proposition}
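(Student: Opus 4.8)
The plan is to work throughout with a fixed minimum cycle-separating cut $S=\delta(F)$, so that $|S|=\zeta(G)=:k$, the cut $S$ is independent, and the two fragments are the components $F$ and $\bar F$ of $G-S$, each containing a cycle. For the connectivity claim in~(i) I would argue by minimality of $S$. If a fragment $F$ were disconnected, then, since $F$ contains a cycle, some component $C$ of $F$ contains a cycle; as $C$ sends edges only to $\bar F$, its boundary satisfies $\delta(C)\subseteq S$, and $\delta(C)$ is cycle-separating because $C$ contains a cycle while the opposite side still contains the cycle of $\bar F$. Hence $|\delta(C)|\ge k=|S|$, forcing $\delta(C)=S$; but then every other component of $F$ meets neither $\bar F$ nor $C$ and is thus a connected component of $G$, contradicting the connectedness of $G$. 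So each fragment is connected.

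For the $2$-connectivity statements in~(i) I would suppose a fragment $F$ has a cut-vertex $v$ and let $C_1,\dots,C_m$ (with $m\ge2$) be the components of $F-v$. Writing $a_i$ for the number of edges from $v$ to $C_i$ (so $a_i\ge1$), $s_i$ for the number of edges of $S$ meeting $C_i$, and $c$ for the number of edges of $S$ meeting $v$, the degree of $v$ and the size of $S$ give the identities $\sum_i a_i+c=3$ and $\sum_i s_i+c=k$. The cut $\delta(V(C_i))$ has size $a_i+s_i$ and, since its opposite side always contains the cycle of $\bar F$, it is cycle-separating exactly when $C_i$ contains a cycle. A cyclic $C_i$ with $a_i+s_i\le k$ therefore yields a fragment with $V(C_i)\subsetneq V(F)$, impossible when $F$ is an atom; the identities above show that once $k\ge4$ at most one $C_i$ can be cyclic, and each acyclic $C_i$ is, by Lemma~\ref{lemma:pvmultipol}, a tree on $a_i+s_i-2$ vertices that can be transferred to $\bar F$ so as to reduce the cut below $k$, again contradicting the minimality of $k$. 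For an arbitrary fragment with $\zeta(G)\ge3$ the same counting produces a cycle-separating cut of size $<k$ as soon as $k\ge4$, while the boundary value $k=3$ is eliminated by invoking the independence of the minimum cut $S$ to exclude the one surviving configuration. This case analysis is the more laborious part of~(i).

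The core of the proposition is~(ii), which I would prove by a single transfer argument. Let $A$ be a nontrivial atom with $S=\delta(A)$ and $|S|=k$, and suppose for contradiction that $A$ has a cycle-separating cut $T$ with $|T|\le k/2$, splitting $A$ into parts $A_1$ and $A_2$ each containing a cycle. The $k$ edges of $S$ are incident with $A_1$ and $A_2$ in numbers $k_1$ and $k_2$ with $k_1+k_2=k$; relabel so that $k_1\le k/2$. Then in $G$ the boundary of $A_1$ is $\delta(A_1)=T\cup(\text{the }k_1\text{ edges of }S\text{ at }A_1)$, so $|\delta(A_1)|=|T|+k_1\le k/2+k/2=k$. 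Moreover $\delta(A_1)$ is cycle-separating, because $A_1$ contains a cycle and the opposite side contains the cycle of $A_2$. Hence $|\delta(A_1)|=k$ and $A_1$ is a fragment with $V(A_1)\subsetneq V(A)$, contradicting the minimality of the atom $A$. Consequently every cycle-separating cut of $A$ has more than $k/2$ edges, and the nontriviality of $A$ guarantees that its cycle rank is large enough for this minimum-cut bound to coincide with $\zeta(A)$; for the trivial atom, which is a single cycle, no such cut exists and the statement indeed fails, which explains the hypothesis.

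The step I expect to be the main obstacle is the $2$-connectivity analysis in~(i): keeping track of how the three edges at the cut-vertex and the $k$ edges of $S$ distribute among the components $C_i$, and in particular disposing of pendant \emph{acyclic} components and of the boundary case $k=3$, where one must use the independence of minimum cuts rather than a pure size count. By contrast, the transfer argument for~(ii) is short once one observes that splitting the atom and re-attaching the cheaper half of $S$ never raises the total cut above $k$; the only subtlety there is to ensure that both $A_1$ and $A_2$ genuinely retain a cycle, so that the transported cut is truly cycle-separating, and to reconcile the minimum-cut value with the cycle-rank definition of $\zeta(A)$ in the degenerate case.
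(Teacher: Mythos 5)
The paper itself does not prove this proposition---it is imported from Nedela and \v Skoviera~\cite{NS:cc}---so there is no in-paper argument to measure you against; I am judging the proposal on its own terms. Your proof that fragments are connected is correct, and the core of (ii) is also sound: choosing the cyclic component of $A-T$ that meets at most half of the edges of $S$ and bounding its boundary in $G$ by $|T|+k/2\le k$ is exactly the right transfer argument.

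The genuine gaps are in the $2$-connectivity part of (i), which you yourself flag as the hard step. First, the mechanism you propose for killing an acyclic component $C_i$ of $F-v$---``transfer it to $\bar F$ so as to reduce the cut below $k$''---does not work as stated: moving $C_i$ across the cut replaces the $s_i$ edges of $S$ at $C_i$ by the $a_i$ edges joining $C_i$ to $v$, so the cut size changes by $a_i-s_i$, which is nonnegative whenever $a_i=2$ and $s_i\le 2$; you would also have to check that $F-C_i$ still contains a cycle for the new cut to be cycle-separating. What actually closes this case is a counting argument, not a transfer: a nonempty tree separated from a cubic graph has boundary of size at least $3$ by Lemma~\ref{lemma:pvmultipol}, and summing $a_i+s_i$ over the components against your identities $\sum_i a_i+c=3$ and $\sum_i s_i+c=k$ leaves strictly less than $3$ units of boundary for the acyclic components once a cyclic component has absorbed at least $k$ (or $k+1$, in the atom case). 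Second, your framework only ever tests the cuts $\delta(C_i)$, which are cycle-separating ``exactly when $C_i$ contains a cycle''; this silently omits the configuration in which no $C_i$ is cyclic because every cycle of $F$ passes through $v$ itself (then some $a_i=2$ and $C_i\cup\{v\}$ contains a cycle while each $C_j$ is a tree). There you must apply the minimality of $S$ to $\delta(C_i\cup\{v\})$ instead, which forces the other component to be a nonempty tree with boundary at most $2$, the desired contradiction. Finally, in (ii) the closing remark that ``nontriviality guarantees the cycle rank is large enough'' is not free: since the paper caps $\zeta(A)$ by $\beta(A)=(|V(A)|-k+2)/2$, you still owe an argument that a nontrivial atom has more than $k-2$ trivalent vertices, and this requires more than the bare fact that $A$ is not a shortest cycle.
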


In the present paper we focus on cyclically $4$-edge-connected
cubic graphs, in particular on those with cyclic connectivity
exactly $4$. From the results mentioned earlier it follows that
a cyclically $4$-edge-connected cubic graph has no bridges and
no $2$-edge-cuts. Furthermore, every $3$-edge-cut separates a
single vertex, and every $4$-edge-cut which is not
cycle-separating consists of the four edges adjacent to some
edge.

An important method of constructing cyclically
$4$-edge-connected cubic graphs from smaller ones applies the
following operation which we call an I-\textit{extension}. In a cubic
graph $G$ take two edges $e$ and $f$, subdivide each of $e$ and
$f$ with a new vertex $v_e$ and $v_f$, respectively, and by add
a new edge between $v_e$ and $v_f$. The resulting graph,
denoted by $G(e,f)$ is said to be obtained by an
I-\textit{extension} across $e$ and $f$. It is not difficult to
see that if $G$ is cyclically $4$-edge-connected and $e$ and
$f$ are non-adjacent edges of $G$, then so is $G{(e,f)}$.

A
well-known theorem of Fontet~\cite{Font} and Wormald~\cite{W}
states that all cyclically $4$-edge-connected cubic graphs can
be obtained from the complete graph $K_4$ and the cube $Q_3$ by
repeatedly applying I-extensions to pairs of non-adjacent
edges. However, I-extensions are also useful for constructing
cubic graphs in general. For example, in~\cite{BrGM} all
connected cubic graphs up to 32 vertices have been generated by
using I-extensions as main construction operation.

For more information on cyclic connectivity the reader may wish
to consult~\cite{NS:cc}.

\medskip\noindent \textbf{2.3. Edge-colourings.} A
$k$-\textit{edge-colouring} of a graph $G$ is a mapping
$\phi\colon E(G)\to\mathbf{C}$ where $\mathbf{C}$ is a set of
$k$ colours. If all pairs of adjacent edges receive distinct
colours, $\phi$ is said to be \textit{proper}; otherwise it is
called \textit{improper}. Graphs with loops do not admit proper
edge-colourings because of the self-adjacency of loops. Since
we are mainly interested in proper colourings, the adjective
``proper'' will usually be dropped. For multipoles,
edge-colourings are defined similarly; that is to say, each
edge receives a colour irrespectively of the fact whether it
is, or it is not, incident with a vertex.

The result of Shannon~\cite{Sh} implies that every loopless
cubic graph, and hence every loopless cubic multipole, can be
properly coloured with four colours, see also~\cite{J}. In the
study of snarks it is often convenient to take the set of
colours $\mathbf{C}$ to be the set
$\mathbb{Z}_2\times\mathbb{Z}_2=\{(0,0),(0,1),(1,0),(1,1)\}$
where $(0,0)$, $(0,1)$, $(1,0)$, and $(1,1)$ are identified
with $0$, $1$, $2$, and $3$, respectively. We say that a
multipole is \textit{colourable} if it admits a
$3$-edge-colouring and \textit{uncolourable} otherwise. For a
$3$-edge-colouring of a cubic graph or a cubic multipole we use
the colour-set $\mathbf{C}=\{1,2,3\}$ because such a colouring
is in fact a nowhere-zero
$\mathbb{Z}_2\times\mathbb{Z}_2$-flow. This means that for
every vertex $v$ the sum of colours incident with $v$, the
\textit{outflow} at $v$, equals $0$ in
$\mathbb{Z}_2\times\mathbb{Z}_2$. The following fundamental
result~\cite{Bla, D} is a direct consequence of this fact.

\begin{theorem}\label{lemma:parity} {\rm (Parity Lemma)}
Let $M$ be a $k$-pole endowed with a proper
$3$-edge-colouring with colours $1$, $2$, and $3$. If the set
of all semi-edges contains $k_i$ edges of colour $i$ for
$i\in\{1,2,3\}$, then
$$
k_1\equiv k_2\equiv k_3\equiv k\pmod 2.
$$
\end{theorem}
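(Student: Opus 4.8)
The plan is to work entirely inside the flow interpretation already set up in the excerpt: regard the three colours as the nonzero elements of $\mathbb{Z}_2\times\mathbb{Z}_2$, with $1=(0,1)$, $2=(1,0)$, and $3=(1,1)$, so that a proper $3$-edge-colouring is precisely a nowhere-zero flow in which every vertex has outflow $0$. The idea is to form a single global sum of edge colours in which the proper (interior) edges cancel automatically, leaving only a relation among the semiedges; reading this relation off coordinate by coordinate will then yield the claimed congruences.

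First I would carry out the vertex count. Summing the outflow over all vertices of $M$ gives $\sum_{v} 0 = 0$ in $\mathbb{Z}_2\times\mathbb{Z}_2$. Rewriting this sum by edges rather than by vertices, each edge contributes its colour once for every vertex-end it has: a proper edge is counted twice and hence contributes $2c=0$, an isolated edge has no vertex-end and contributes nothing, while a dangling edge contributes its colour exactly once. Consequently the sum of the colours of all dangling edges equals $0$.

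Next I would fold in the isolated edges. Each isolated edge carries two semiedges of a single colour $c$ and therefore contributes $2c=0$ to the total colour-sum over the $k$ semiedges. Combining this with the previous step, the colour-sum over all semiedges vanishes, that is,
\[
k_1\cdot 1 + k_2\cdot 2 + k_3\cdot 3 = (0,0).
\]
Substituting the coordinate representations and equating each coordinate to $0$ modulo $2$ gives $k_2+k_3\equiv 0$ and $k_1+k_3\equiv 0$, whence $k_1\equiv k_2\equiv k_3\pmod 2$. Finally, since $k=k_1+k_2+k_3\equiv 3k_1\equiv k_1\pmod 2$, all four quantities share the same parity, which is exactly the assertion.

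The argument is short and classical, so the only genuine care-point — which I would make explicit rather than leave implicit — is the bookkeeping of the three edge types. One must weight each edge in the vertex sum by its number of vertex-incidences (two, one, or zero for proper, dangling, and isolated edges, respectively), and one must remember that an isolated edge contributes an even number of equally coloured semiedges to the relevant $k_i$. Once these incidences are tracked correctly, the cancellation $2c=0$ in $\mathbb{Z}_2\times\mathbb{Z}_2$ does all the work.
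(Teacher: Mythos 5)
Your proof is correct and follows exactly the route the paper intends: the paper presents the Parity Lemma as a ``direct consequence'' of the fact that a proper $3$-edge-colouring is a nowhere-zero $\mathbb{Z}_2\times\mathbb{Z}_2$-flow, i.e., summing the zero outflows over all vertices cancels the proper edges and forces the colours on the semiedges to sum to $0$, which coordinate-wise yields the congruences. Your explicit bookkeeping of proper, dangling, and isolated edges merely spells out what the paper leaves implicit, so there is no substantive difference.
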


Now let $M$ be a loopless cubic multipole that cannot be
properly $3$-edge-coloured. Then $M$ has a proper
$4$-edge-colouring with colours from the set
$\mathbf{C}=\mathbb{Z}_2\times\mathbb{Z}_2$. Such a colouring
will not be a $\mathbb{Z}_2\times\mathbb{Z}_2$-flow anymore
since every vertex incident with an edge coloured $0$ will have
a non-zero outflow. It is natural to require the colour $0$ to
be used as little as possible, that is, to require the set of
edges coloured $0$ to be the minimum-size colour class. Such a
$4$-edge-colouring will be called \textit{minimum}. In a
minimum $4$-edge-colouring of $M$ every edge $e$ coloured $0$
must be adjacent to edges of all three non-zero colours; in
particular, $e$ must be a proper edge. It follows that exactly
one colour around $e$ appears twice.

By summing the outflows at vertices incident with edges
coloured $0$ we obtain the following useful result due to
Fouquet~\cite[Theorem~1]{Fouq} and Steffen
\cite[Lemma~2.2]{Steffen:class}.

\begin{theorem}\label{thm:parity2}
Let $\phi$ be a minimum $4$-edge-colouring of a loopless cubic
multipole $M$ with $m$ edges coloured $0$, and for
$i\in\{1,2,3\}$ let $m_i$ denote the number of those edges
coloured $0$ that are adjacent to two edges coloured $i$. Then
$$m_1\equiv m_2\equiv m_3\equiv m\pmod{2}.$$
\end{theorem}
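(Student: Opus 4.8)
The plan is to compute the total outflow over the vertices incident with the zero-coloured edges and to interpret the result modulo $2$ inside the group $\mathbb{Z}_2\times\mathbb{Z}_2$. Recall from the discussion preceding the statement that in a minimum $4$-edge-colouring every edge coloured $0$ is proper and is adjacent to edges of all three nonzero colours, with exactly one nonzero colour appearing twice among its four neighbouring edges. Thus the $m$ zero-edges split into three classes according to which colour is repeated, of sizes $m_1$, $m_2$, and $m_3$, with $m_1+m_2+m_3=m$.

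First I would fix one zero-edge $e$ of class $i$ and examine its two endpoints $u,v$. At each of $u,v$ the three incident edges are $e$ (colour $0$) together with two nonzero colours; summing over both endpoints, the edges incident with $e$ other than $e$ itself carry all three nonzero colours with colour $i$ appearing twice. Hence the combined outflow at $u$ and $v$, namely the $\mathbb{Z}_2\times\mathbb{Z}_2$-sum of all six colours at these two vertices (counting $e$ twice, which cancels since $x+x=0$), equals $1+2+3+i = i$ in $\mathbb{Z}_2\times\mathbb{Z}_2$, using $1+2+3=0$. So each zero-edge of class $i$ contributes exactly the colour $i$ to the grand total.

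Next I would sum these local contributions over all $m$ zero-edges. The grand total is then
\[
\sum_{e \text{ coloured } 0} i(e) = m_1\cdot 1 + m_2 \cdot 2 + m_3\cdot 3
\]
in $\mathbb{Z}_2\times\mathbb{Z}_2$, where $i(e)$ is the repeated colour at $e$. On the other hand, the same grand total can be computed by summing the outflow vertex by vertex over every vertex incident with at least one zero-edge; since a proper $3$-edge-colouring of the remaining (nonzero-coloured) part would give outflow $0$ at each vertex not touching a zero-edge, and more to the point each such interior contribution telescopes, the reading that matters is the coordinatewise parity. Writing $1=(0,1)$, $2=(1,0)$, $3=(1,1)$, the equation $m_1(0,1)+m_2(1,0)+m_3(1,1)=0$ forces, coordinatewise, $m_2+m_3\equiv 0$ and $m_1+m_3\equiv 0 \pmod 2$; together with $m_1+m_2+m_3=m$ these yield $m_1\equiv m_2\equiv m_3\equiv m\pmod 2$.

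The main obstacle, and the step demanding the most care, is the bookkeeping that shows the grand total vanishes in $\mathbb{Z}_2\times\mathbb{Z}_2$, i.e. that $m_1\cdot 1+m_2\cdot 2+m_3\cdot 3=0$. This is where the global structure enters: one must argue that summing all vertex outflows cancels completely, which follows because every nonzero-coloured edge has both its ends accounted for (it either lies between two zero-touching vertices or contributes to a telescoping cancellation), mirroring the derivation of the ordinary Parity Lemma. Once the vanishing of the total is established, the passage to the three congruences is the purely arithmetic final step described above.
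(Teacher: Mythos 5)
Your overall strategy is the same as the paper's: the paper obtains this statement in one line, ``by summing the outflows at vertices incident with edges coloured $0$'', which is exactly your plan, and two of your three steps are sound. The local computation is correct (no two zero-edges can share a vertex, so the vertices meeting zero-edges pair off, and the pair belonging to a zero-edge of class $i$ has combined outflow $1+2+3+i=i$), and so is the closing coordinatewise arithmetic. The genuine gap is the step you yourself flag as ``the main obstacle'': you never prove that the grand total $m_1\cdot 1+m_2\cdot 2+m_3\cdot 3$ vanishes, and the justification you sketch --- ``every nonzero-coloured edge has both its ends accounted for'' --- fails exactly where the stated generality matters. The theorem is asserted for cubic \emph{multipoles}, and a dangling edge has only one end at a vertex. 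Summing outflows over \emph{all} vertices of a multipole gives not $0$ but the sum of the colours of its semiedges; consequently your grand total equals $k_1\cdot 1+k_2\cdot 2+k_3\cdot 3$, where $k_i$ is the number of semiedges coloured $i$, and this need not be $0$. In fact the literal statement you set out to prove is false for multipoles with semiedges: take a $2$-factor of the Petersen graph consisting of two $5$-cycles $C_1, C_2$ and the complementary perfect matching coloured $3$; pick $e\in C_1$, $f\in C_2$, colour the paths $C_1-e$ and $C_2-f$ alternately $1,2$, sever $e$ into two dangling edges (coloured with the locally missing colour), and give $f$ the colour $0$. The resulting $2$-pole is uncolourable (by the Parity Lemma a $3$-edge-colouring would force equal colours on the two semiedges and hence colour the Petersen graph), so this is a minimum $4$-edge-colouring with $m=1$; yet $m_3=1$ and $m_1=m_2=0$, so $m_1\equiv m_2\equiv m_3\equiv m\pmod 2$ fails. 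No amount of care in your bookkeeping can close this gap without changing the setting.

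The statement, due to Fouquet and Steffen, is correct for cubic \emph{graphs} (multipoles without semiedges), which is the only case the paper ever uses --- it invokes the theorem solely for the cubic graphs $\tilde G_1$ and $G'$. In that case your argument is completed by one explicit line, which you should write in place of the ``telescoping'' metaphor: the sum of outflows over \emph{all} vertices equals $\sum_e 2\phi(e)=0$ because every edge is proper and counted at both of its ends, while every vertex not meeting a zero-edge has outflow $1+2+3=0$ by properness; subtracting, the sum over the vertices that do meet zero-edges is $0$, which is precisely your grand total. If you want the full multipole generality, you must carry the semiedge terms through the same computation, which yields the corrected statement $m_1+k_1\equiv m_2+k_2\equiv m_3+k_3\equiv m+k\pmod 2$ (with $k$ the number of semiedges); this reduces to the printed congruences only when $k_1\equiv k_2\equiv k_3\equiv 0\pmod 2$, as for graphs, and not in general. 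So either restrict your proof to graphs --- which suffices for everything in this paper --- or prove the corrected multipole form; as written, the pivotal cancellation is both unjustified and untrue.
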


We finish the discussion of colourings with the definition of
the standard recolouring tool, a Kempe chain. Let $M$ be a
cubic multipole whose edges have been properly coloured with
colours from the set
$\{0,1,2,3\}=\mathbb{Z}_2\times\mathbb{Z}_2$. For any two
distinct colours $i,j\in\{1,2,3\}$ we define an
$i$-$j$-\textit{Kempe chain} $P$ to be a non-extendable walk
that alternates the edges with colours $i$ and $j$. Clearly,
$P$ is either an even circuit, or is a path that ends with
either a semiedge or with a vertex incident with an edge
coloured $0$. It is easy to see that switching the colours $i$
and $j$ on $P$ gives rise to a new proper $4$-edge-colouring of
$M$. Furthermore, if the original colouring was a minimum
$4$-edge-colouring, so is the new one.

\medskip
\noindent\textbf{2.4. Snarks.} A snark is, essentially, a
nontrivial cubic graph that has no $3$-edge-colouring. Precise
definitions vary depending on what is to be considered
``nontrivial''. In many papers, especially those dealing with
snark constructions, snarks are required to be cyclically
$4$-edge-connected and have girth at least $5$; see for example
\cite{C03, FMS-survey}. However, in~\cite{BrPS, hlund} the
girth requirement is dropped, demanding snarks to be cyclically
$4$-edge-connected but allowing them to have $4$-cycles.

Another group of papers, especially those dealing with the
structural analysis of snarks, adopts the widest possible
definition of a snark, permitting all kinds of trivial features
such as triangles, digons and even bridges; see, for
example~\cite{CCW, ChS, NS:dec}. In this paper, our usage of
the term snark agrees with the latter group: we define a
\textit{snark} to be a connected cubic graph that cannot be
$3$-edge-coloured.

This paper deals with snarks that are far from being
$3$-edge-colourable. Two measures of uncolourability will be
prominent in this paper.  The \emph{oddness} $\omega(G)$ of a
bridgeless cubic graph $G$ is the smallest number of odd
circuits in a 2-factor of $G$. The \emph{resistance} $\rho(G)$
of a cubic graph $G$ is the smallest number of edges of $G$
which have to be removed in order to obtain a colourable graph.
Obviously, if $G$ is colourable, then $\omega(G)=\rho(G)=0$. If
$G$ is uncolourable, then both $\omega(G)\ge 2$ and $\rho(G)\ge
2$. Furthermore, $\rho(G)\le\omega(G)$ for every bridgeless
cubic graph $G$.

The following lemma is due to Steffen~\cite{Steffen:class}.

\begin{lemma}\label{lemma:res2odd2}
Let $G$ be a bridgeless cubic graph. Then $\rho(G)=2$ if and
only if $\omega(G)=2$.
\end{lemma}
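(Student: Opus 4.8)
The easy direction is $\omega(G)=2\Rightarrow\rho(G)=2$: this is immediate from the two facts recorded just before the lemma, namely $\rho(G)\le\omega(G)$ and that an uncolourable graph has $\rho(G)\ge 2$. So if $\omega(G)=2$ then $2\le\rho(G)\le\omega(G)=2$. The whole content is therefore the converse $\rho(G)=2\Rightarrow\omega(G)=2$. Since oddness is always even and $G$ is uncolourable (so $\omega(G)\ge 2$), it suffices to produce a $2$-factor of $G$ with at most two odd circuits, i.e.\ to show $\omega(G)\le 2$.

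\textbf{Reducing to a minimum $4$-edge-colouring.} The plan is to convert the resistance witness into a proper $4$-edge-colouring. Because $\rho(G)=2$, there are two edges $e_1,e_2$ whose deletion leaves a $3$-edge-colourable graph; colouring the rest with $\{1,2,3\}$ and $e_1,e_2$ with $0$ gives a $4$-edge-colouring. In a \emph{proper} colouring two edges of colour $0$ cannot be adjacent, so once we know the witnessing pair is independent this is a genuine minimum $4$-edge-colouring with exactly $m=2$ edges coloured $0$. I would then invoke Theorem~\ref{thm:parity2}: with $m=2$ the numbers $m_1,m_2,m_3$ are all even, and since (by the remark preceding that theorem) each $0$-edge has its four neighbouring edges coloured with all three of $1,2,3$ and exactly one colour repeated, each $0$-edge contributes to exactly one $m_i$, whence $m_1+m_2+m_3=2$. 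Thus one $m_i$ equals $2$ and the others vanish; say the repeated colour at both $e_1$ and $e_2$ is $3$. For such a $0$-edge the four neighbours are coloured $\{3,3,1,2\}$, and properness forces one $3$ at each endpoint. Hence every endpoint of $e_1,e_2$ is covered by colour $3$; every remaining vertex carries all of $1,2,3$ and is covered too. Therefore the edges of colour $3$ form a perfect matching $M$ of $G$.

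\textbf{Building the $2$-factor and counting odd circuits.} Now $F=E(G)\setminus M$ is a $2$-factor, and I would analyse $F\setminus\{e_1,e_2\}$, which is properly edge-coloured by $\{1,2\}$ and so is a disjoint union of even cycles together with paths whose only endpoints are the four endpoints of $e_1,e_2$; there are exactly two such paths. Restoring $e_1$ and $e_2$ merges these paths into at most two cycles of $F$, and every other component of $F$ is an even cycle. Consequently at most two circuits of $F$ can be odd, giving $\omega(G)\le 2$ and hence $\omega(G)=2$.

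\textbf{Main obstacle.} The delicate point is the assumption that the resistance-$2$ witness $\{e_1,e_2\}$ can be taken \emph{independent}, which is exactly what is needed to read it as a minimum $4$-edge-colouring. I expect this to be the crux: if $e_1$ and $e_2$ share a vertex $w$, a direct check (using that deleting a single edge cannot make $G$ colourable, as $\rho(G)\ge 2$) shows the two neighbours of $w$ along $e_1,e_2$ both miss the colour of the third edge at $w$, and one then eliminates this case by a short Kempe-chain recolouring that separates the two $0$-edges without increasing their number. Handling this adjacency carefully — equivalently, verifying that the minimum number of $0$-coloured edges over all proper $4$-edge-colourings equals $\rho(G)$ — is the part that requires real work; the rest is the parity and $2$-factor bookkeeping above.
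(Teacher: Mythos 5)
The paper does not prove this lemma at all --- it is quoted from Steffen~\cite{Steffen:class} --- so there is no internal proof to compare yours against; what you have written is essentially the standard proof of Steffen's result, and it is sound. The easy direction is exactly as you say. In the hard direction, once the pair $\{e_1,e_2\}$ is known to be independent, your chain of deductions is complete and correct: any proper $4$-edge-colouring with $m$ edges coloured $0$ gives $\rho(G)\le m$, so your colouring is indeed a \emph{minimum} $4$-edge-colouring; Theorem~\ref{thm:parity2} together with the remark preceding it forces both $0$-edges to repeat the same colour, say $3$; properness puts one edge of colour $3$ at each endpoint of $e_1$ and $e_2$, so colour class $3$ is a perfect matching; and its complementary $2$-factor has at most two odd circuits, whence $\omega(G)=2$.

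The step you left as a sketch --- independence of the witnessing pair --- is a real issue, but it closes even more cleanly than your description suggests: the adjacent case is outright impossible, so there is nothing to ``separate''. Suppose $e_1=uw$ and $e_2=vw$, let $f$ be the third edge at $w$, coloured $\alpha$ in a $3$-edge-colouring $\phi$ of $G-\{e_1,e_2\}$. As you observe, $\rho(G)\ge 2$ forces both $u$ and $v$ to miss precisely the colour $\alpha$ (otherwise colouring $e_1$ or $e_2$ with the missing colour at its outer endpoint is proper at $w$ as well, yielding a colouring of $G$ minus a single edge). But then, for any colour $\gamma\ne\alpha$, the subgraph of $G-\{e_1,e_2\}$ formed by the edges coloured $\alpha$ or $\gamma$ has exactly three vertices of odd degree, namely $u$, $v$ and $w$ (every other vertex carries all three colours and so has degree $2$ there), contradicting the handshake lemma. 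Hence no such $\phi$ exists, i.e.\ every $2$-edge set witnessing $\rho(G)=2$ is automatically a matching. Your Kempe-chain mechanism also works --- the $\alpha$-$\gamma$ chain starting at $u$ must end at $v$ or at $w$, and in either case switching it leaves the configuration ``outer endpoint misses a colour different from $\phi(f)$'', so one of $e_1,e_2$ can be coloured, again contradicting $\rho(G)\ge 2$ --- but note the outcome is a contradiction, not a recolouring with the two $0$-edges separated. Finally, the degenerate case $u=v$ (a double edge at $w$, which the paper's conventions allow) dies immediately: there $e_1$ is adjacent to only two coloured edges and can always be coloured, contradicting $\rho(G)\ge 2$.
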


One of the methods of constructing snarks from smaller ones
uses I-extensions (cf. Section~2.2). The following result
from~\cite{NS:dec} tells us when an I-extension of a snark is
again a snark.

\begin{lemma}\label{lemma:I-ext}
Let $G$ be a snark and $e$ and $f$ be distinct edges of $G$.
Then $G{(e,f)}$ is a snark if and only if the graph $G-\{e,f\}$
is uncolourable.
\end{lemma}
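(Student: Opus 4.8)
The plan is to prove Lemma~\ref{lemma:I-ext} by relating $3$-edge-colourings of the extended graph $G(e,f)$ to $3$-edge-colourings of the edge-deleted graph $G-\{e,f\}$. Recall that $G(e,f)$ is obtained by subdividing $e$ with a vertex $v_e$, subdividing $f$ with a vertex $v_f$, and joining $v_e$ to $v_f$ by a new edge. Since $G$ itself is a snark and hence uncolourable, $G(e,f)$ is a snark if and only if $G(e,f)$ is uncolourable and nontrivial; here the relevant content is the colourability equivalence, so I would focus on proving that $G(e,f)$ is colourable if and only if $G-\{e,f\}$ is colourable. This reduces the lemma to a local analysis of how a proper $3$-edge-colouring can be transferred across the two subdivided edges and the connecting rung $v_ev_f$.

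The key steps would be as follows. Write $e=x_1x_2$ and $f=y_1y_2$, so that in $G(e,f)$ the vertex $v_e$ is adjacent to $x_1$, $x_2$, and $v_f$, while $v_f$ is adjacent to $y_1$, $y_2$, and $v_e$. First I would prove the easy direction: suppose $G(e,f)$ has a proper $3$-edge-colouring $\phi$. Restricting $\phi$ to the edges of $G-\{e,f\}$ gives a proper colouring of exactly that graph, since every edge of $G-\{e,f\}$ survives in $G(e,f)$ (as a half of a subdivided edge or as an untouched edge) and properness is inherited by subgraphs. Hence $G-\{e,f\}$ is colourable. For the converse, suppose $\psi$ is a proper $3$-edge-colouring of $G-\{e,f\}$. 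The task is to colour the three edges at $v_e$ (namely $x_1v_e$, $v_ex_2$, $v_ev_f$) and the three at $v_f$ consistently. At $x_1$ the two edges other than the deleted $e$ already carry two colours under $\psi$, forcing the colour of $x_1v_e$; likewise the colours of $v_ex_2$, $v_fy_1$, $v_fy_2$ are all forced by $\psi$ at $x_1,x_2,y_1,y_2$ respectively. The only freedom left is the rung $v_ev_f$, and I would check that the Parity Lemma (Theorem~\ref{lemma:parity}), applied to the $2$-pole obtained by cutting the rung, guarantees that the forced colours at $v_e$ and at $v_f$ leave a common admissible colour for $v_ev_f$, so the colouring extends.

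The main obstacle is the converse direction, specifically verifying that the forced colours are mutually compatible and that the rung can be coloured. The subtlety is that the two edges remaining at $x_1$ in $G-\{e,f\}$ might already use the same colour as each other is impossible by properness, but I must rule out the case where the forced colour at $x_1v_e$ coincides with the forced colour at $v_ex_2$, which would make $v_e$ improperly coloured. Here the Parity Lemma does the work: severing the rung produces a $2$-pole whose two semiedges, by the lemma, must carry the same colour, and a short case analysis on the colour pattern around $v_e$ (and symmetrically $v_f$) shows the third colour is available for the rung exactly when no conflict arises. I would organise this as a parity argument on the colour multiset at $v_e$ rather than an exhaustive enumeration. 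Finally, I would note the nontriviality of $G(e,f)$ follows from the construction: if $G$ is a snark then $G(e,f)$ is again connected and cubic, so combining the colourability equivalence with uncolourability of $G(e,f)$ (inherited from the hypothesis via the deleted subgraph being uncolourable) yields the stated equivalence.
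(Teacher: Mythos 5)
The paper states Lemma~\ref{lemma:I-ext} without proof (it is imported from \cite{NS:dec}), so your argument has to stand on its own, and it has a genuine gap in the converse direction. Your reduction and the forward direction are fine: $G(e,f)$ is connected and cubic, so it is a snark precisely when it is uncolourable, and restricting a colouring of $G(e,f)$ to the subgraph $G-\{e,f\}$ is proper. The gap sits exactly at the point you flag as the main obstacle, and the tool you assign to it cannot close it. The Parity Lemma (Theorem~\ref{lemma:parity}) is a statement about proper colourings of a multipole that \emph{already exist}; applying it to the $2$-pole obtained by cutting the rung presupposes that the forced colours make $v_e$ and $v_f$ proper, which is precisely what is in question. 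In the conflict case (the colour missing at $x_1$ under $\psi$ equals the colour missing at $x_2$), that $2$-pole admits no proper colouring extending $\psi$, so the Parity Lemma has nothing to act on; it settles only your second worry (that the two leftover colours at $v_e$ and $v_f$ agree), not the first. Note also that your converse direction never uses the hypothesis that $G$ is uncolourable, so if it were valid it would show that \emph{every} proper colouring of $G-\{e,f\}$ extends to $G(e,f)$ for an arbitrary cubic graph $G$. That is false: take $G=K_4$ with $e,f$ the two diagonals of a $4$-cycle and colour the remaining $4$-cycle alternately $1,2,1,2$; the missing colour at all four vertices is $3$, so both edges at $v_e$ are forced to colour $3$ and the extension fails, even though $K_4(e,f)\cong K_{3,3}$ is colourable.

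The missing idea is to run the parity argument on the $4$-pole rather than the $2$-pole, and to invoke snarkness of $G$ there. Extend $\psi$ to the $4$-pole $M$ obtained from $G$ by severing $e$ and $f$, giving each dangling edge the colour missing at its end-vertex. By Theorem~\ref{lemma:parity}, each colour occurs an even number of times on the four semiedges, so the pattern on $(e_1,e_2,f_1,f_2)$ is of type $aaaa$, $aabb$, $abab$, or $abba$. In the first two patterns the two halves of $e$ receive equal colours and likewise the two halves of $f$, so rejoining the severed edges yields a proper $3$-edge-colouring of $G$, contradicting that $G$ is a snark. Hence only $abab$ and $abba$ occur; in either pattern there is no conflict at $v_e$ or $v_f$, and the two leftover colours coincide, so the rung gets the common third colour and $G(e,f)$ is coloured. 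This is where the hypothesis that $G$ is a snark enters, and it is indispensable. A further small point: the lemma assumes only that $e$ and $f$ are distinct; if they are adjacent, their common end-vertex has degree $1$ in $G-\{e,f\}$ and the colours of its two dangling edges are not forced, so your forced-colour bookkeeping needs an adjustment in that case (the $4$-pole parity argument above still goes through).
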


Another method of constructing snarks is based on extending
multipoles to cubic graphs, see~\cite{ChS}. If the multipole in
question is uncolourable, it can be extended to a snark simply
by restoring $3$-regularity. We are therefore interested in
extending colourable multipoles. For $k\ge 2$, we say that a
$k$-pole $M$ \textit{extends} to a snark if there exists a
colourable multipole $N$ such that $M*N$ is a snark. The graph
$M*N$ is called a snark \textit{extension} of $M$.

Given a $k$-pole $M$ with semiedges $e_1,e_2,\dots,e_k$, we
define its \textit{colouring set} to be the following set of
$k$-tuples:
$$\Col(M)=\left\{\phi(e_1)\phi(e_2)\dots\phi(e_k);\,
\text{$\phi$ is a $3$-edge-colouring of $M$}\right\}.$$ Note
that the set $\Col(M)$ depends on the ordering in which the
semiedges are listed. We therefore implicitly assume that such
an ordering is given. As the colourings ``inside'' a multipole
can usually be ignored, we define two multipoles $M$ and $N$
to be \textit{colour-equivalent} if $\Col(M)=\Col(N)$.

Any colouring of a colourable multipole can be changed to a
different colouring by permuting the set of colours. The
particular colour of a semiedge is therefore not important, it
is only important whether it equals or differs from the colour
of any other semiedge. By saying this we actually define
the \textit{type} of a colouring $\phi$ of a multipole $M$: it
is the lexicographically smallest sequence of colours assigned
to the semiedges of $M$ which can be obtained from $\phi$ by
permuting the colours.

By the Parity Lemma, each colouring of a $4$-pole has one of
the following types: $1111$, $1122$, $1212$, and $1221$.
Observe that every colourable $4$-pole admits at least two
different types of colourings. Indeed, we can start with any
colouring and switch the colours along an arbitrary Kempe chain
to obtain a colouring of another type. Colourable $4$-poles
thus can have two, three, or four different types of
colourings. Those attaining exactly two types are particularly
important for the study of snarks; we call them
\textit{colour-open} $4$-poles, as opposed to
\textit{colour-closed} multipoles discussed in more detail
in~\cite{NS:dec}.

The following result appears in~\cite{ChS}.

\begin{proposition}\label{prop:ext}
A colourable $4$-pole extends to a snark if and only if it is
colour-open.
\end{proposition}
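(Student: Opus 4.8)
The plan is to convert the statement about snark extensions into a purely combinatorial condition on colouring sets and then resolve it by a short case analysis of the four possible types. The starting observation is that for two $4$-poles $M$ and $N$ the complete junction $M*N$ is $3$-edge-colourable precisely when $\Col(M)\cap\Col(N)\neq\emptyset$: a proper colouring of $M*N$ restricts to colourings of $M$ and $N$ that agree on every joined pair of semiedges, and conversely any colourings of $M$ and $N$ inducing the same $4$-tuple on the semiedges glue together to a proper colouring of $M*N$. Consequently $M*N$ is a snark if and only if $N$ is colourable and $\Col(M)\cap\Col(N)=\emptyset$, provided the junction is connected, which we arrange by choosing $N$ (and using that $M$) connected.

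Next I would record that $\Col(M)$ is invariant under permuting the three colours, since composing a proper $3$-edge-colouring with a permutation of $\{1,2,3\}$ is again a proper $3$-edge-colouring. Hence $\Col(M)$ is a union of entire type-orbits, and for any two $4$-poles the condition $\Col(M)\cap\Col(N)\neq\emptyset$ is \emph{equivalent} to $M$ and $N$ sharing a common type. Writing $\mathcal{T}(M)\subseteq\{1111,1122,1212,1221\}$ for the set of types of $M$, the whole extension problem therefore reduces to a statement about these four-element sets: $M$ extends to a snark if and only if some colourable $N$ has $\mathcal{T}(N)\cap\mathcal{T}(M)=\emptyset$.

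For the converse direction I would argue by pigeonhole. If the colourable $4$-pole $M$ is \emph{not} colour-open, then $|\mathcal{T}(M)|\geq 3$. Every colourable $4$-pole has at least two types, so $|\mathcal{T}(N)|\geq 2$ for each colourable $N$; since there are only four types in total, $\mathcal{T}(M)$ and $\mathcal{T}(N)$ must overlap. Thus $\Col(M)\cap\Col(N)\neq\emptyset$ for every colourable $N$, so every junction $M*N$ is colourable and $M$ does not extend to a snark. This settles the ``only if'' part of the proposition.

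For the forward direction I would, given a colour-open $M$, construct a colourable partner $N$ realising exactly the complementary pair of types, so that $\mathcal{T}(M)\cap\mathcal{T}(N)=\emptyset$. The substantive step is the claim that all six two-element type-sets are realised by small colour-open $4$-poles: the $4$-pole formed by two independent edges realises a set $\{1111,T\}$ with $T$ a $2{+}2$ type, the particular $T$ depending on how the four semiedges are paired, while the $4$-pole formed by two adjacent vertices each carrying two semiedges realises a pair of $2{+}2$ types. Relabelling the positions of the four semiedges turns these two gadgets into realisations of every pair in $\{1111,1122,1212,1221\}$, so in particular the complement of $\mathcal{T}(M)$ is always realised; taking $N$ to be such a gadget yields $\Col(M)\cap\Col(N)=\emptyset$ and hence a snark $M*N$. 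I expect the main obstacle to be exactly this realisation step: one must check by hand that each explicit gadget has precisely the two claimed types (so that it is genuinely colour-open and its type-set is the correct complement), and one must confirm that the junction can be kept connected, which forces a little care when the required complement contains the type $1111$ and only a disconnected two-edge gadget is available.
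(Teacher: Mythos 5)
Your proposal targets a statement for which the paper itself gives no proof at all: Proposition~\ref{prop:ext} is simply quoted from Chladn\'y and \v Skoviera~\cite{ChS}, so there is no in-paper argument to compare against. Judged on its own merits, your argument is essentially correct, and it is in effect a reconstruction of the standard proof. The reduction of colourability of a complete junction to $\Col(M)\cap\Col(N)\neq\emptyset$, the invariance of $\Col$ under colour permutations (so that intersecting colouring sets is equivalent to sharing a type), and the pigeonhole step for the converse (a colour-closed $M$ has at least three of the four types, every colourable $N$ has at least two, hence they always share one, so $M*N$ is never a snark) are all sound. For the forward direction, your two gadgets are exactly the canonical completions of Proposition~\ref{prop:unique2}: the two-adjacent-vertices $4$-pole realises each pair of $2{+}2$ types, the two-independent-edges $4$-pole realises each pair $\{1111,T\}$, and by Proposition~\ref{prop:iso} the complement of $\mathcal{T}(M)$ is of the first kind exactly when $M$ is isochromatic and of the second kind exactly when $M$ is heterochromatic.

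The one point you flag but leave open --- connectivity of $M*N$ when the complement contains type $1111$, forcing the disconnected two-edge gadget --- should actually be closed, and the Parity Lemma (Theorem~\ref{lemma:parity}) does it. That case arises precisely when $M$ is heterochromatic, and a heterochromatic colourable $4$-pole without closed components is automatically connected: by the Parity Lemma a component carrying exactly one semiedge is uncolourable, and a component carrying exactly two semiedges forces equal colours on them; hence a colourable $4$-pole with no closed components is either connected or a union of two $2$-poles, and the latter admits a colouring of type $1111$, so it is isochromatic, not heterochromatic. Thus whenever the two-edge gadget is needed, $M$ is connected and joining two pairs of its semiedges preserves connectivity; in the isochromatic case the gadget itself is connected and every component of $M$ attaches to it, so connectivity is automatic there as well. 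Finally, note that colourable \emph{closed} components must be excluded by convention (e.g.\ $K_4$ together with two isolated edges is colour-open yet extends to nothing connected); this is implicit in the paper, where $4$-poles always arise as components of a connected cubic graph minus an edge-cut and are therefore connected.
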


A $4$-pole $M$ will be called \textit{isochromatic} if its
semiedges can be partitioned into two pairs such that in every
colouring of $M$ the semiedges within each pair are coloured
with the same colour. A $4$-pole $M$ will be called
\textit{heterochromatic} of its semiedges can be partitioned
into two pairs such that in every colouring of $M$ the
semiedges within each pair are coloured with distinct colours.
The pairs of semiedges of an isochromatic or a heterochromatic
$4$-pole mentioned above will be called \textit{couples}.

Note that the $4$-pole $C_4$ obtained from a $4$-cycle by
attaching one dangling edge to every vertex is colour-closed,
and hence neither isochromatic nor heterochromatic. Indeed,
with respect to a cyclic ordering of its semiedges it admits
colourings of three types, namely $1111$, $1122$, and $1221$
(but not $1212$). In particular, if a snark $G$ contains a
$4$-cycle $C$, then, as is well-known, $G-V(C)$ stays
uncolourable.

The following two results are proved in~\cite{ChS}:

\begin{proposition}\label{prop:iso}
Every colour-open $4$-pole is either isochromatic or
heterochromatic, but not both. Moreover, it is isochromatic if
and only if it admits a colouring of type $1111$.
\end{proposition}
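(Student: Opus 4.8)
The plan is to translate everything into the language of \emph{types} and then run a short finite analysis. Fix the ordering $e_1,e_2,e_3,e_4$ of the semiedges and call $1122,1212,1221$ the \emph{pair-types}. By the Parity Lemma every colouring has type $1111$, $1122$, $1212$, or $1221$, and the three pair-types correspond bijectively to the three partitions of $\{e_1,e_2,e_3,e_4\}$ into two pairs: $1122\leftrightarrow\{e_1,e_2\},\{e_3,e_4\}$; $1212\leftrightarrow\{e_1,e_3\},\{e_2,e_4\}$; $1221\leftrightarrow\{e_1,e_4\},\{e_2,e_3\}$. A colouring of a pair-type assigns equal colours inside each block of the corresponding partition and distinct colours across blocks, whereas a colouring of type $1111$ is constant. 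I would make this dictionary explicit at the outset, since every subsequent step is a statement about which types a $4$-pole can admit.

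First I would record the types admitted by isochromatic and heterochromatic $4$-poles. If $M$ is isochromatic with couples $\{e_i,e_j\}$ and $\{e_k,e_l\}$, then every colouring satisfies $\phi(e_i)=\phi(e_j)$ and $\phi(e_k)=\phi(e_l)$; such a colouring has type $1111$ (when the two couple-colours coincide) or the pair-type of the partition $\{e_i,e_j\},\{e_k,e_l\}$ (when they differ), and no other type can occur. Since every colourable $4$-pole admits at least two types, an isochromatic $4$-pole admits both of these; in particular it admits type $1111$. Dually, if $M$ is heterochromatic with couples $\{e_i,e_j\}$, $\{e_k,e_l\}$, then $\phi(e_i)\neq\phi(e_j)$ and $\phi(e_k)\neq\phi(e_l)$ always, so by parity each colouring is a pair-type, and it cannot be the pair-type of $\{e_i,e_j\},\{e_k,e_l\}$; hence $M$ admits exactly the other two pair-types and \emph{never} type $1111$. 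Already this shows that no colourable $4$-pole is simultaneously isochromatic and heterochromatic.

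Now let $M$ be colour-open, so it admits exactly two types. If one of them is $1111$, the other is the pair-type of some partition $\{e_i,e_j\},\{e_k,e_l\}$; in both admitted types the colours agree inside $\{e_i,e_j\}$ and inside $\{e_k,e_l\}$, so $M$ is isochromatic, and by the previous paragraph it is not heterochromatic. If neither admitted type is $1111$, the two types are two of the three pair-types, and I claim the remaining pair-partition furnishes heterochromatic couples. This is the one computational point, which I would settle by the three symmetric checks: for instance, for the types $1212$ and $1221$ the couples $\{e_1,e_2\}$, $\{e_3,e_4\}$ receive distinct colours in each type. Consequently $M$ is heterochromatic, and by the first paragraph not isochromatic. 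Combining the two cases yields the dichotomy, and since the first case is exactly the case in which type $1111$ is admitted, the stated equivalence ``isochromatic if and only if it admits a colouring of type $1111$'' drops out at the same time.

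The only real obstacle is bookkeeping rather than depth: one must use the Parity Lemma carefully to exclude every pattern outside the four listed types, and keep straight that ``colour-open'' means \emph{exactly} two types while the auxiliary fact ``at least two types'' is what supplies the second admitted type in the isochromatic analysis. A computation-free alternative to the three case checks identifies the semiedges with $\mathbb{Z}_2\times\mathbb{Z}_2$ and the pair-types with the three nonzero functionals into $\mathbb{Z}_2$; the sum of two distinct functionals is the third, and the cosets of its kernel form precisely the partition witnessing heterochromaticity, since two semiedges in a common coset differ under both of the original functionals and hence receive distinct colours in each admitted type.
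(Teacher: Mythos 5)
Your proof is correct, but there is nothing in the paper to compare it against: the paper does not prove Proposition~\ref{prop:iso} at all, it merely quotes it (together with Proposition~\ref{prop:unique2}) as a result established in~\cite{ChS}. Your argument is a sound, self-contained substitute. Its two pillars are exactly right: first, the Parity Lemma dictionary identifying the admissible types $1111$, $1122$, $1212$, $1221$ with the constant pattern and the three pair-partitions, from which it follows that an isochromatic $4$-pole can admit only type $1111$ and the pair-type of its own couples, while a heterochromatic one can admit only the two pair-types distinct from that of its couples (hence never $1111$); second, the case split for a colour-open $4$-pole according to whether $1111$ is among its two admitted types, with the three symmetric checks settling the case of two pair-types. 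One point deserves emphasis: your appeal to the paper's observation that every colourable $4$-pole admits at least two types of colourings is not cosmetic but essential. Without it, a $4$-pole admitting a single pair-type, say only $1122$, would be simultaneously isochromatic (couples $\{e_1,e_2\},\{e_3,e_4\}$) and heterochromatic (couples $\{e_1,e_3\},\{e_2,e_4\}$), so the ``not both'' clause would fail, and it is precisely this observation that forces an isochromatic $4$-pole to admit type $1111$. You flag and use it correctly, so the argument stands; in fact your second paragraph proves the ``not both'' statement for all colourable $4$-poles, slightly more than the proposition asserts. The closing $\mathbb{Z}_2\times\mathbb{Z}_2$-functional remark is a pleasant repackaging of the three checks, though the finite verification is already complete without it.
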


\begin{proposition}\label{prop:unique2}
Every colour-open $4$-pole can be extended to a snark by adding
at most two vertices, and such an extension is unique. A
heterochromatic multipole extends by joining the semiedges
within each couple, that is, by adding no new vertex. An
isochromatic multipole extends by attaching the semiedges of
each couple to a new vertex, and by connecting these two
vertices with a new edge.
\end{proposition}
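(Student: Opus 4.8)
The plan is to establish Proposition~\ref{prop:unique2} in two parts: first, existence of an extension that adds at most two vertices, and second, its uniqueness. By Proposition~\ref{prop:iso}, every colour-open $4$-pole $M$ is either isochromatic or heterochromatic, so I would treat these two cases separately and verify that the prescribed construction works in each.

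First I would handle the \emph{heterochromatic} case. Here $M$ has couples, say $\{e_1,e_2\}$ and $\{e_3,e_4\}$, such that in every colouring of $M$ the two semiedges in each couple receive distinct colours. I would form the graph $M*N$ where $N$ is the $4$-pole consisting of two isolated edges joining $e_1$ to $e_2$ and $e_3$ to $e_4$ (adding no vertex). To show $M*N$ is a snark, I must argue it is uncolourable: any colouring of $M*N$ would restrict to a colouring of $M$ in which $e_1,e_2$ share a colour (being joined by an edge) and likewise $e_3,e_4$, contradicting the heterochromatic property. Conversely I should confirm $M*N$ is connected and cubic, which is where the junction definition from Section~2.1 and the fact that $M$ is colourable (hence loopless, extendable by Shannon) come in. For the \emph{isochromatic} case the couples share colours in every colouring, so joining $e_1,e_2$ directly would produce a colouring; instead I attach $e_1,e_2$ to one new vertex $u$, attach $e_3,e_4$ to a second new vertex $w$, and add the edge $uw$. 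The Parity Lemma applied at $u$ (its two semiedges from the couple being equal, and the third edge $uw$) forces a colour on $uw$, and the symmetric constraint at $w$ combined with the isochromatic equality yields a contradiction, proving uncolourability. Thus in both cases at most two vertices are added and the result is a snark.

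The uniqueness part is the step I expect to be the main obstacle, since ``unique'' must be interpreted correctly: among all colourable multipoles $N$ with at most two added vertices that extend $M$ to a snark, the extension should be determined. I would argue that the connecting multipole $N$ is forced by the colouring constraints. The key tool is that $M*N$ is a snark precisely when $N$ is colourable but $\Col(M)$ and $\Col(N)$ are ``incompatible'' in the sense that no colouring of $M$ matches a colouring of $N$ on the shared semiedges; by Proposition~\ref{prop:ext} this incompatibility is exactly what colour-openness guarantees. Since a colour-open $4$-pole realises only two of the four types $1111,1122,1212,1221$, the complementary $4$-pole $N$ must realise a disjoint set of types, and I would enumerate the minimal such $N$ using the Parity Lemma to show only the described one-edge-pair or two-vertex gadget avoids introducing a common colouring while staying colourable and minimal.

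The crux of the uniqueness argument is therefore a small finite case analysis over colouring types: I would list, for an isochromatic $M$ (types $1111$ and one of $1122/1212/1221$) and a heterochromatic $M$ (two of the non-$1111$ types), exactly which type-sets a colourable extending $N$ on at most two vertices can have, and check by the Parity Lemma that the minimal colourable $N$ avoiding all of $M$'s types is uniquely the claimed gadget. The subtlety to watch is ruling out alternative gadgets on one or two vertices that might also block colourability; I expect the Parity Lemma constraint $k_1\equiv k_2\equiv k_3\equiv k\pmod 2$ on the four semiedges, together with colourability of $N$, to pin down $N$ up to colour-equivalence, which suffices since colour-equivalent multipoles yield the same junction behaviour.
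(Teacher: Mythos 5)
The paper does not actually prove Proposition~\ref{prop:unique2}; it is quoted from Chladn\'y and \v Skoviera~\cite{ChS}, so there is no in-paper argument to compare against. Judged on its own terms, your proposal is sound and follows the natural route. The existence half is correct, though the isochromatic case is over-engineered: you do not need the Parity Lemma or the symmetric constraint at $w$ at all, since a proper colouring of the extension would already force the two semiedges of a couple attached to $u$ to receive distinct colours, contradicting isochromaticity on the spot. (Symmetrically, the heterochromatic case is killed immediately by the junction forcing equal colours within a couple, as you say.)

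The uniqueness half is the right idea but is still a plan rather than a proof, and the one step you must make explicit is the enumeration of \emph{all} colourable completing multipoles $N$ with four semiedges on at most two vertices. A degree count shows there are exactly six candidates: adding one vertex is impossible (three of its edge-ends would absorb semiedges, leaving one semiedge unmatched), two new vertices must be joined by an edge with the four semiedges split two--two between them (loops and double edges either violate cubicity or make $N$ uncolourable, and the paper's definition of a snark extension requires $N$ colourable), and adding no vertex means one of the three pairings of the semiedges. Against the two possible type-sets --- $\{1111,1122\}$ for isochromatic and $\{1212,1221\}$ for heterochromatic, written with respect to the couple ordering (for an isochromatic $4$-pole the second type is forced to be the one constant on couples, not an arbitrary choice among $1122/1212/1221$ as your sketch suggests) --- exactly one of the six candidates avoids a common colouring type, which is the uniqueness claim. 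With that enumeration written out, your argument is complete; without it, ``the Parity Lemma pins down $N$'' is an assertion rather than a proof.
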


Colour-open $4$-poles can be combined to form larger $4$-poles
from smaller ones by employing partial junctions: we take two
$4$-poles $M$ and $N$, choose two semiedges in each of them,
and perform the individual junctions. In general, such a
junction need not respect the structure of the couples of the
$4$-poles participating in the operation. In this manner it may
happen that, for example, a partial junction of two
heterochromatic $4$-poles results in an isochromatic dipole or
in a heterochromatic dipole. In Theorem~\ref{thm:4decomp}, one
of our decomposition theorems, partial junctions of $4$-poles
will occur in the reverse direction.

\section{Decomposition theorems}\label{sec:decomp}

The aim of this section is to show that every snark with
oddness at least $4$, cyclic connectivity~$4$, and minimum
number of vertices can be decomposed into two smaller
cyclically $4$-edge-connected snarks $G_1$ and $G_2$ by
removing a cycle-separating $4$-edge-cut, adding at most two
vertices to each of the components, and by restoring
$3$-regularity. This will be proved in two steps --
Theorem~\ref{thm:closure} and Theorem~\ref{thm:4decomp}.

Theorem~\ref{thm:closure} is a decomposition theorem for
cyclically $4$-edge-connected cubic graphs proved in 1988 by
Andersen et al.~\cite[Lemma~7]{AFJ}. Roughly speaking, it
states that every cubic graph $G$ whose cyclic connectivity
equals $4$ can be decomposed into two smaller cyclically
$4$-edge-connected cubic graphs $G_1$ and $G_2$ by removing a
cycle-separating $4$-edge-cut, adding two vertices to each of
the components, and by restoring $3$-regularity. Our proof is
different from the one in~\cite{AFJ} and provides useful
insights into the problem. For instance, it offers the
possibility to determine conditions under which it is feasible
to extend a $4$-pole to a cyclically $4$-edge-connected cubic graph
 by adding two isolated edges rather than by adding two
new vertices.

Theorem~\ref{thm:4decomp} deals with a particular situation
where the cyclically $4$-edge-connected cubic graph $G$ in
question is a snark. As explained in the previous section,
every snark containing a cycle-separating $4$-edge-cut that
leaves a colour-open component can be decomposed into two
smaller snarks $G_1$ and $G_2$ by removing the cut, adding
\emph{at most two vertices} to each of the components, and by
restoring $3$-regularity. Unfortunately, $G_1$ or $G_2$ are not
guaranteed to be cyclically $4$-edge-connected because snark
extensions forced by the colourings need not coincide with
those forced by the cyclic connectivity (see
Example~\ref{ex:forced_completion} below). Moreover,
Proposition~\ref{prop:unique2} suggests that restoring
$3$-regularity by adding no new vertices, that is, by joining
pairs of the four $2$-valent vertices to each other in one of
the components, may be necessary in order for $G_1$ or $G_2$ to
be a snark. If this is the case, Theorem~\ref{thm:closure}
cannot be applied. Nevertheless, Theorem~\ref{thm:4decomp}
shows that if $G$ is a smallest nontrivial snark with oddness
at least $4$, then we can form $G_1$ and $G_2$ in such a way
that they indeed will be cyclically $4$-edge-connected snarks.

\begin{example}\label{ex:forced_completion}
We give an example of a cyclically $4$-edge-connected snark in
which a decomposition along a given cycle-separating
$4$-edge-cut forces one of the resulting smaller snarks to have
cyclic connectivity smaller than $4$. To construct such a snark
take the Petersen graph and form a $4$-pole $H$ of order 10 by
severing two non-adjacent edges and a $4$-pole $I$ of order 8
by removing two adjacent vertices. It is easy to see that $H$
is heterochromatic with couples being formed by the semiedges
obtained from the same edge, and $I$ is isochromatic with
couples formed by the semiedges formerly incident with the same
vertex. Let us create a cubic graph $G$ by arranging two copies
of $H$ and one copy of $I$ into a cycle, and by performing
junctions that respect the structure of the couples. The
partial junction of two copies of $H$ contained in $G$, denoted
by $H^2$, is again a heterochromatic $4$-pole, so $G$ is a
junction of an isochromatic $4$-pole $I$ with a heterochromatic
$4$-pole, and therefore a snark. Furthermore, the cyclic
connectivity of $G$ equals $4$. Let us decompose $G$ by
removing from $G$ the $4$-edge-cut $S$ separating $I$ from
$H^2$ and by completing each of the components to a snark.
Proposition~\ref{prop:unique2} implies that $I$ can be
completed to a copy $G'$ of the Petersen graph while $H^2$
extends to a snark $G''$ of order $20$ by joining the semiedges
within each couple, that is, by adding no new vertex. The same
Proposition states that the decomposition of $G$ into $G'$ and
$G''$ is uniquely determined by $S$. However, $G''$ has a
cycle-separating $2$-edge-cut connecting the two copies of $H$
contained in it. Therefore the low connectivity of $G''$ is
unavoidable.
\end{example}

We proceed to Theorem~\ref{thm:closure}. It requires one
auxiliary result about comparable cuts. Let $S$ and $T$ be two
edge-cuts in a graph $G$. Let us denote the two components of
$G-S$ by $H_1$ and $H_2$ and those of $G-T$ by $K_1$ and $K_2$.
The cuts $S$ and $T$ are called \emph{comparable} if
$H_i\subseteq K_j$ or $K_j\subseteq H_i$ for some
$i,j\in\{1,2\}$.

\begin{lemma}\label{lemma:comparable2-cuts}
Let $G$ be a cyclically $4$-edge-connected cubic graph and let
$K$ be a component arising from the removal of a
cycle-separating $4$-edge-cut from $G$. Then any two nontrivial
$2$-edge-cuts in $K$ are comparable, or $K$ is a $4$-cycle.
\end{lemma}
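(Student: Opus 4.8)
The plan is to argue by contradiction. Suppose $K$ is not a $4$-cycle, and let $S_1 = \{a_1, b_1\}$ and $S_2 = \{a_2, b_2\}$ be two nontrivial $2$-edge-cuts in $K$ that are \emph{not} comparable. Since $K$ arises from removing a cycle-separating $4$-edge-cut $R$ from the cyclically $4$-edge-connected graph $G$, the key structural fact I would exploit is this: $K$ carries four semiedges (the severed ends of $R$), and any edge-cut of the graph $K$ must, when ``reattached'' to the rest of $G$, combine with a subset of these four semiedges to form a genuine edge-cut of $G$. Because $\zeta(G)\ge 4$, no such combined cut of size less than $4$ can be cycle-separating in $G$. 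This is the lever that forces the two small cuts inside $K$ to interact rigidly.

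First I would set up the standard ``cross-cut'' or uncrossing argument. Each $S_i$ splits $K$ into two parts; write the four parts determined by $S_1$ and $S_2$ together as the four \emph{quadrants} $Q_{11}, Q_{12}, Q_{21}, Q_{22}$, where $Q_{pq}$ lies on side $p$ of $S_1$ and side $q$ of $S_2$. Non-comparability of $S_1$ and $S_2$ means (by definition of comparable) that none of the four quadrants is empty, so all four are nonempty. Standard submodularity of the edge-boundary function $\delta_K(\cdot)$ then gives the inequality
\[
|\delta_K(Q_{11})| + |\delta_K(Q_{22})| \le |\delta_K(S_1\text{-side})| + |\delta_K(S_2\text{-side})| = 2 + 2 = 4,
\]
together with the symmetric inequality for $Q_{12}, Q_{21}$. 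The aim is to show that these quadrant boundaries, once interpreted inside $G$ via the four semiedges of $R$, must all be trivial or degenerate, and that this forces $K$ to collapse to a $4$-cycle.

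The heart of the argument is the bookkeeping that links the internal boundaries $\delta_K(Q_{pq})$ to the four semiedges of $R$. Each quadrant $Q_{pq}$, viewed as a vertex set of $G$, has boundary in $G$ equal to its internal boundary in $K$ plus however many of the four $R$-semiedges attach to it. Because $\zeta(G)\ge 4$ and no quadrant together with its attached semiedges can be separated by fewer than $4$ edges unless it is a single vertex or acyclic, I would use Lemma~\ref{lemma:pvmultipol} to pin down the size of each quadrant: a quadrant whose total boundary in $G$ is small must be acyclic and hence, by the lemma, contain a prescribed tiny number of vertices. Combining the submodularity bound above with the distribution of the four $R$-semiedges among the quadrants, I expect each quadrant to be forced to be a single vertex (or empty, contradicting non-comparability), so that $K$ has exactly four vertices arranged in a cycle — precisely the excluded $4$-cycle case.

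The main obstacle I anticipate is the case analysis over how the four semiedges of $R$ distribute among the four quadrants, since the submodularity bound alone only controls the \emph{internal} $K$-boundaries and the conclusion hinges on a tight interplay between these and the semiedge distribution. In particular one must rule out configurations where a quadrant is acyclic but not a single vertex, or where two quadrants share the cut edges in a way that lets $K$ be larger than a $4$-cycle while still keeping every reconstituted cut in $G$ of size at least $4$. Handling the degenerate subcase in which $S_1$ and $S_2$ share an edge, and the subcase where a quadrant is empty but the cuts are still nominally incomparable, will require care; I would treat these separately and show each either yields comparability or collapses $K$ to the $4$-cycle. The girth bound $\zeta(G)\le g(G)$ from Proposition~\ref{prop:girth} should help certify that small acyclic quadrants cannot secretly contain cycles.
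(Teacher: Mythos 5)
Your setup is the right one, and in fact it is the same skeleton as the paper's own proof: the quadrant decomposition $Q_{pq}=X_p\cap Y_q$, the observation that non-comparability forces all four quadrants to be nonempty, the bookkeeping $|\delta_G(Q_{pq})|=|\delta_K(Q_{pq})|+(\text{number of attached semiedges})$, and the final collapse of each quadrant to a single vertex via Lemma~\ref{lemma:pvmultipol}. The problem is that your proposal stops exactly where the proof has to start: everything after ``I expect each quadrant to be forced to be a single vertex'' is the actual content of the lemma, and none of it is carried out. The most important missing piece is a preliminary fact you never isolate: \emph{every nontrivial $2$-edge-cut of $K$ splits the four attachment vertices (the ends in $K$ of the severed $4$-cut $S$) into two pairs, two on each side.} This is where the nontriviality hypothesis --- which your sketch never uses --- enters: if one side of a $2$-cut contained at most one attachment vertex, then either that side is acyclic, in which case Lemma~\ref{lemma:pvmultipol} forces it to be a single degree-$2$ vertex and the cut is trivial, or it contains a cycle and its boundary in $G$ has size at most $3$, contradicting $\zeta(G)\ge 4$. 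Without this $2$--$2$ property, the semiedge distributions over the quadrants that your planned case analysis must handle include $(4,0,0,0)$, $(3,1,0,0)$, $(2,1,1,0)$, and so on, and your submodularity inequality (which only gives $|\delta_K(Q_{11})|+|\delta_K(Q_{22})|\le 4$ and its mirror) is nowhere near strong enough to dispose of them. With the $2$--$2$ property, only two patterns survive: one attachment vertex per quadrant, or two attachment vertices in each of two diagonally opposite quadrants.

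Even after that reduction, the substance of the proof is the edge-count analysis, which is absent from your proposal. Writing $a,\dots,f$ for the numbers of edges between the various pairs of quadrants as in the paper, one must (i) kill the diagonal pattern by showing it forces two opposite counts to be at least $2$ each, making one of the $2$-cuts have size at least $4$; and (ii) in the $(1,1,1,1)$ pattern, pin down the counts to $a=c=e=f=1$ and $b=d=0$ using $3$-edge-connectivity of $G$ together with $|R|=|T|=2$, so that each quadrant has boundary exactly $3$ in $G$, is therefore acyclic, and is a single vertex by Lemma~\ref{lemma:pvmultipol} --- whence $K$ is a $4$-cycle. These computations \emph{are} the proof; announcing that you expect them to work is not a substitute. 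Two smaller points: your worry about ``a quadrant being empty while the cuts are still nominally incomparable'' is vacuous, since emptiness of a quadrant is precisely equivalent to comparability of the two cuts; and cuts sharing an edge need no separate treatment, because in the quadrant framework a shared edge simply runs between two diagonally opposite quadrants and is counted as such.
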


\begin{proof}
Let $S$ be the cycle-separating $4$-edge-cut that separates $K$
from the rest of $G$, and let $A=\{a_1,a_2,a_3,a_4\}$ be the
set of the vertices of $K$ incident with an edge from $S$.
Since $S$ is independent, the vertices of $A$ are pairwise
distinct. Proposition~\ref{prop:cc-properties}~(i) implies that
$K$ is $2$-connected. It follows that for every nontrivial
$2$-edge-cut $Q$ in $K$ the graph $K-Q$ consists of two
components, each containing exactly two vertices of $A$.

Let $R$ and $T$ be two nontrivial 2-edge-cuts in $K$. Denote
the components of $K-R$ by $X_1$ and $X_2$, and those of $K-T$
by $Y_1$ and $Y_2$. Observe that the subgraphs $X_i\cap Y_j$
for $i,j\in\{1,2\}$ need not all be non-empty. Let $a$ be the
number of edges between $X_1\cap Y_1$ and $X_1\cap Y_2$,
$b$~the number of edges between $X_1\cap Y_1$ and $X_2\cap
Y_2$, $c$~the number of edges between $X_1\cap Y_1$ and
$X_2\cap Y_1$, $d$ the number of edges between $X_1\cap Y_2$
and $X_2\cap Y_1$, $e$ the number of edges between $X_2\cap
Y_1$ to $X_2\cap Y_2$, and finally $f$ the number of edges
between $X_1\cap Y_2$ and $X_2\cap Y_2$; see
Figure~\ref{fig:sets}.


\begin{figure}[htbp]
  \centerline{
     \scalebox{0.5}{
       \input{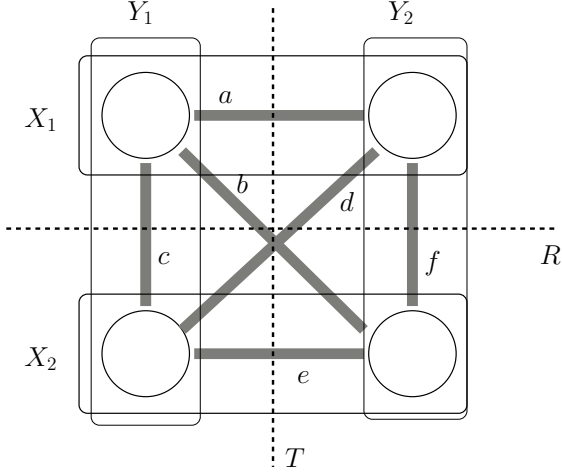}
     }
  }
\caption{Crossing edge-cuts $R$ and $T$.}
\label{fig:sets}
\end{figure}

If at least one of the sets $X_1\cap Y_2$, $X_2\cap Y_1$, and
$X_1\cap Y_1$ is empty, then the definition of comparable cuts
directly implies that the cuts $R$ and $T$ are comparable, as
required. Thus we can assume that all the subgraphs $X_i\cap
Y_j$ are nonempty. Our aim is to show that in this case $K$ is
a $4$-cycle. We start by showing that each of the subgraphs
$X_1\cap Y_1$, $X_1\cap Y_2$, $X_2\cap Y_1$, and $X_2\cap Y_2$
contains exactly one element of $A$. Suppose that one of them,
say $X_1\cap Y_1$, contains no vertex from $A$. Since both $R$
and $T$ separate the vertices from $A$ in such a way that both
components contain two vertices from $A$, we deduce that both
$X_1\cap Y_2$ and $X_2\cap Y_1$ contain two vertices from $A$
each, while $X_2\cap Y_2$ contains no vertex from $A$. Now
$|\delta_K(X_1\cap Y_1)|=a+b+c\ge 3$, because $G$ has no
bridges and no 2-edge-cuts. Further, since $X_1\cap Y_2$
contains two vertices from $A$ and $G$ is cyclically
$4$-edge-connected, we see that $|\delta_K(X_1\cap
Y_2)|=a+d+f\ge 2$. However, $R$ is a 2-cut, so $b+c+d+f=2$.
Therefore $2a\ge 3$ and hence $a\ge 2$. Similarly, $e\ge 2$.
But then $|T|\ge a+e\ge 4$, which contradicts the fact that $T$
is a 2-edge-cut. Thus all the subgraphs $X_i\cap Y_j$ contain
an element of $A$, which in turn implies that each $X_i\cap
X_j$ contains exactly one vertex from~$A$.

To finish the proof we show that $a=c=e=f=1$ and $b=d=0$.
Suppose that $a=2$. Since $T$ is a 2-edge-cut, we have that
$b=d=e=0$. Now $c+d+e\ge 2$ and $b+e+f\ge 2$ because $G$ is
$3$-edge-connected, so $c\ge2$ and $f\ge2$, and hence $|R|\ge
c+f\ge 4$, a contradiction. Thus $a\le 1$. Similarly, we can
derive that $c\le 1$, $e\le 1$, and $f\le 1$. If $b=2$, then
$a=c=d=e=f=0$ implying that $G$ has a bridge, which is absurd.
Hence $b\le 1$ and similarly $d\le 1$. Suppose that $a=0$. As
$G$ is $3$-edge-connected, we have $2\le a+b+c=b+c\le 2$ and
similarly $2\le a+d+f=d+f\le 2$. It follows that that
$b=c=d=f=1$ and hence $|R|= b+c+d+f=4$, which contradicts the
fact that $R$ is a $2$-cut. Therefore $a=1$ and similarly
$c=e=f=1$, which also implies that $b=d=0$. Finally, every
subgraph $X_i\cap Y_j$ has $|\delta_G(X_i\cap Y_j)|=3$, so
$X_i\cap Y_j$ is acyclic and therefore, by
Lemma~\ref{lemma:pvmultipol}, a single vertex. In other words,
$K$ is a $4$-cycle. This completes the proof.
\end{proof}

We are ready to prove the decomposition theorem of Andresen et
al.~\cite{AFJ}.

\begin{theorem}\label{thm:closure}
Let $G$ be a cyclically $4$-edge-connected cubic graph with a
cycle-separating $4$-edge-cut whose removal leaves components
$G_1$ and $G_2$. Then each of $G_1$ and $G_2$ can be extended
to a cyclically $4$-edge-connected cubic graph by adding two
adjacent vertices and restoring $3$-regularity.
\end{theorem}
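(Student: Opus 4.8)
The plan is to complete $K\in\{G_1,G_2\}$ by attaching two new adjacent vertices $u$ and $v$ to its four $2$-valent vertices $a_1,\dots,a_4$ (those formerly incident with the edges of the cut) and to show that a suitable completion is cyclically $4$-edge-connected. Since $\zeta(G)\ge 4$, Proposition~\ref{prop:cc-properties}(i) guarantees that $K$ is $2$-connected, hence bridgeless. To restore $3$-regularity I add the edge $uv$ and distribute the four semiedges of $K$ so that two are joined to $u$ and two to $v$; this amounts to choosing one of the three ways of partitioning $\{a_1,\dots,a_4\}$ into two pairs. Denote the resulting cubic graph by $\hat K$. As $\hat K$ has at least six vertices, its cycle rank is at least $4$, so it suffices to choose the pairing so that $\hat K$ has no cycle-separating edge-cut of size at most $3$.

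The key device is a size-preserving correspondence between cuts. For $W\subseteq V(K)$ with $u,v\notin W$, each semiedge of $K$ leaving $W$ is matched by an edge of $S$ leaving $W$ in $G$, and by an attachment edge leaving $W$ in $\hat K$; hence $|\delta_{\hat K}(W)|=|\delta_G(W)|$. Using this I first dispose of every cycle-separating cut $C=\delta_{\hat K}(W)$ whose two sides keep $u$ and $v$ together, say with $u,v\notin W$. Then $W$ contains a cycle and its complement in $G$ contains the whole other fragment, so both $W$ and $V(G)\setminus W$ have at least two vertices; thus $\delta_G(W)$ is a cut of $G$ of size at most $3$ that is neither a bridge, nor a $2$-edge-cut, nor a (necessarily vertex-isolating) $3$-edge-cut, contradicting the cyclic $4$-edge-connectivity of $G$.

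The remaining, and main, case is a cycle-separating cut $C$ with $u$ and $v$ on opposite sides; here $uv\in C$, leaving at most two further edges. Let $W$ be the side containing $v$ and put $W'=W\cap V(K)$; since a single new vertex carries no cycle, both $W'$ and $V(K)\setminus W'$ are nonempty. Let $p$ be the number of proper edges of $K$ between $W'$ and $V(K)\setminus W'$. As $K$ is bridgeless, $p\ge 2$, and a direct count of the edges of $C$ (the edge $uv$, the $p$ internal edges, and the attachment edges running ``the wrong way'' relative to the pairing) gives $|C|\ge 3$, with equality forcing $p=2$ together with the two neighbours of $u$ on one side and the two neighbours of $v$ on the other. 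In other words, the two internal edges form a nontrivial $2$-edge-cut of $K$ whose induced partition of $\{a_1,\dots,a_4\}$ is \emph{exactly} the chosen pairing.

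It then remains to choose the pairing so that no nontrivial $2$-edge-cut of $K$ induces it, and this is where Lemma~\ref{lemma:comparable2-cuts} enters: any two nontrivial $2$-edge-cuts of $K$ are comparable (or $K$ is a $4$-cycle), and comparability, together with the fact that each such cut splits $\{a_1,\dots,a_4\}$ evenly, forces both cuts to induce the \emph{same} partition of $\{a_1,\dots,a_4\}$. Hence, unless $K$ is a $4$-cycle, at most one of the three pairings is forbidden and I may select one of the remaining two; when $K$ is the $4$-cycle a direct check shows that the unique crossing pairing yields $K_{3,3}$, which is cyclically $4$-edge-connected. With such a pairing both cases are excluded, so $\hat K$ is cyclically $4$-edge-connected. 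I expect the edge count in the opposite-side case to be the main obstacle, since it is precisely there that one must extract a genuine nontrivial $2$-edge-cut of $K$ matching the pairing; the remaining steps follow cleanly from the cut correspondence and from the comparability lemma.
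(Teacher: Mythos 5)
Your proposal is correct and follows essentially the same route as the paper: the same special case ($K$ a $4$-cycle extended to $K_{3,3}$), the same use of Lemma~\ref{lemma:comparable2-cuts} to show all nontrivial $2$-edge-cuts of $K$ induce one common partition of $\{a_1,\dots,a_4\}$, the same completion by two adjacent vertices with a pairing avoiding that partition, and the same two-case verification (the paper handles the case $uv\in C$ via independence of a minimum cycle-separating cut rather than your explicit edge count, but this is only a cosmetic difference). No gaps.
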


\begin{proof}
It suffices to prove the statement for $G_1$. If $G_1$ is a
$4$-cycle, we can easily extend it to the complete bipartite
graph $K_{3,3}$ which is cyclically $4$-edge-connected, as
required. We therefore assume that $G_1$ is not a $4$-cycle.
Let $A=\{a_1, a_2,a_3, a_4\}$ be the set of vertices of $G_1$
incident with an edge of $\delta_G(G_1)$. By
Lemma~\ref{lemma:comparable2-cuts}, every 2-edge-cut in $G_1$
separates the vertices of $A$ into the same two 2-element sets,
say $\{a_1, a_2\}$ from $\{a_3, a_4\}$. We extend $G_1$ to a
cyclically 4-edge-connected cubic graph $\tilde G_1$ as
follows. Let us take two new vertices $x_1$ and $x_2$ and
construct $\tilde G_1$ from $G_1$ by adding to $G_1$ the edges
$x_1x_2$, $x_1a_1$, $x_1a_3$, $x_2a_2$, and $x_2a_4$, see
Figure~\ref{fig:close}. We now verify that $\tilde G_1$ is
indeed cyclically 4-edge-connected.

\begin{figure}[htbp]
  \centerline{
     \scalebox{0.4}{
       \input{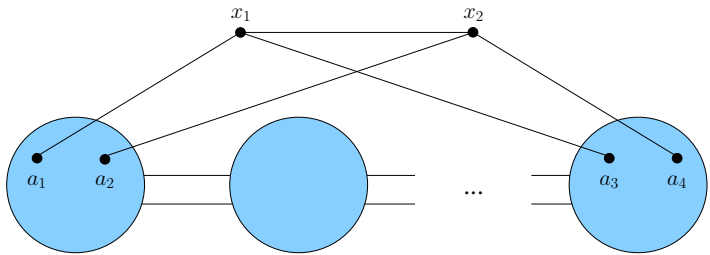}
     }
  }
\caption{Extending $G_1$ to $\tilde G_1$.}
\label{fig:close}
\end{figure}

Suppose to the contrary that $\tilde G_1$ is not cyclically
4-edge-connected. Then $\tilde G_1$ has a minimum-size
cycle-separating edge-cut $F$ such that $|F|<4$. Let $H_1$ and
$H_2$ be the components of $G_1-F$. The cut $F$ cannot consist
entirely of edges of $G_1\cup \delta_G(G_1)$ for otherwise $F$
would be a cycle-separating edge-cut of $G$ of size smaller
than $4$. Therefore the edge $x_1x_2$ is contained in $F$.
Since $F$ is an independent cut, the edges $x_1a_1$, $x_1a_3$,
$x_2a_2$, and $x_2a_4$ do not belong to $F$. This in turn
implies that $a_1$ and $a_3$ belong to one component of $\tilde
G_1-F$ while $a_2$ and $a_4$ belong to the other component of
$\tilde G_1-F$; without loss of generality, let $a_1$ and $a_3$
belong to $H_1$. Since $G_1$ contains no bridge, there exist
edges $e_1$ and $e_2$ in $G_1$ such that
$F=\{x_1x_2,e_1,e_2\}$. But then $\{e_1,e_2\}$ is a 2-edge cut
in $G_1$ that separates the set $\{a_1,a_3\}$ from
$\{a_2,a_4\}$, which is a contradiction. This completes the
proof.
\end{proof}

Before proving the second main result of this section we need
the following fact.

\begin{proposition} \label{prop:odd2}
Let $G$ be a cubic graph with a cycle-separating $4$-edge-cut
whose removal leaves components $G_1$ and $G_2$. If both $G_1$
and $G_2$ are $3$-edge-colourable, then $\omega(G)\le 2$.
\end{proposition}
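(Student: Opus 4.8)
The plan is to turn proper $3$-edge-colourings of the two colourable halves into a single $2$-factor of $G$ with at most two odd circuits. Write $S=\delta_G(G_1)$ for the given cycle-separating $4$-edge-cut and regard $G_1$ and $G_2$ as $4$-poles whose four semiedges are the severed ends of the edges of $S$. Fix proper $3$-edge-colourings $\phi_1$ and $\phi_2$ of $G_1$ and $G_2$ with colours $\{1,2,3\}$; these exist by hypothesis.

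The first key step is a parity observation. Applying the Parity Lemma (Theorem~\ref{lemma:parity}) to the $4$-pole $G_1$, the numbers $k_1,k_2,k_3$ of semiedges of each colour are all even and satisfy $k_1+k_2+k_3=4$. Since three positive even numbers would sum to at least $6$, at least one colour does not occur among the four semiedges of $G_1$; call it $c_1$. I would choose $c_2$ for $G_2$ in the same way.

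Next I would construct the $2$-factor. In each $G_i$ delete every edge coloured $c_i$, keep the two surviving colour classes, and finally reinstate all four edges of $S$; call the resulting spanning subgraph $F$. To see that $F$ is a genuine $2$-factor, check degrees: every vertex not incident with $S$ meets all three colours, so deleting the $c_i$-class leaves it with degree $2$; a vertex incident with $S$ sees the colour $c_i$ only on proper edges, because $c_i$ is absent from the semiedges, so after deletion it keeps exactly one proper edge and, together with its cut edge(s), again has degree~$2$. Counting odd circuits is then the main point. Each circuit of $F$ contained entirely in $G_1$ or $G_2$ alternates the two surviving colours and is therefore even. The only circuits that can be odd are those meeting $S$; but a circuit crosses the cut an even number of times, hence uses an even, positive number of the edges of $S$. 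Since $\size{S}=4$, at most two circuits of $F$ can meet the cut, so $F$ has at most two odd circuits, giving $\omega(G)\le 2$.

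The main obstacle is ensuring that all four cut edges can be restored simultaneously without creating a vertex of degree~$3$; this is exactly what the parity observation secures, as the missing colour $c_i$ is forced onto proper edges and never onto a semiedge. Once that is established, the bound on odd circuits is essentially free from $\size{S}=4$, and—worth stressing—it does not require $\phi_1$ and $\phi_2$ to agree across the cut, so no matching of colourings along $S$ needs to be arranged.
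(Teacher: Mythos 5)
Your proof is correct, and it takes a genuinely different route from the one in the paper. You build a $2$-factor of $G$ directly: the Parity Lemma (Theorem~\ref{lemma:parity}) guarantees a colour $c_i$ absent from the four semiedges of each $4$-pole $G_i$; deleting that colour class leaves every vertex of $G_i$ with the correct degree, and reinstating $S$ yields a $2$-factor in which every circuit avoiding $S$ alternates two colours and is therefore even, while at most $\size{S}/2=2$ circuits can pass through the cut. The paper instead first reduces to the case where $G$ is uncolourable, uses the classification of colour types to conclude that one component is isochromatic (it admits type $1111$, Proposition~\ref{prop:iso}) and the other heterochromatic, combines the two colourings into a $3$-edge-colouring of $G$ minus two cut edges, so that $\rho(G)=2$, and then appeals to Steffen's equivalence $\rho(G)=2\Leftrightarrow\omega(G)=2$ (Lemma~\ref{lemma:res2odd2}). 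Your argument is more elementary and self-contained: it needs only the Parity Lemma, exhibits the $2$-factor explicitly, requires no case split on whether $G$ itself is colourable, and bypasses Lemma~\ref{lemma:res2odd2} entirely (and with it the bridgelessness that lemma presupposes). The paper's route, on the other hand, exercises the isochromatic/heterochromatic machinery that is reused heavily in the rest of Section~\ref{sec:decomp}, which is presumably why it was chosen there. One cosmetic remark: since the statement does not force $S$ to be independent, a vertex of $G_i$ could be incident with two edges of $S$; such a vertex keeps zero proper edges rather than ``exactly one'', but its two reinstated cut edges still give it degree $2$, so your degree count survives unchanged.
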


\begin{proof}
Assume that both $G_1$ and $G_2$ are $3$-edge-colourable. If
$G$ is $3$-edge-colourable, then $\omega(G)=0$. Therefore we
may assume that $G$ is not $3$-edge-colourable. In this
situation $G_1$ admits two types of colourings and $G_2$ admits
the other two types of colourings. One of them, say $G_1$ has a
colouring $\phi_1$ of the type $1111$; by
Proposition~\ref{prop:iso}, $G_1$ is isochromatic and $G_2$ is
heterochromatic. Parity Lemma implies that if we take an
arbitrary $3$-edge-colouring $\phi_2$ of $G_2$, then exactly
two colours occur on the dangling edges of $G_2$. Let $e$ and
$f$ be any two of the dangling edges that receive the same
colour. Then, after permuting the colours in $G_1$, if
necessary, $\phi_1$ and  $\phi_2$ can be easily combined to a
$3$-edge-colouring of $G-\{e,f\}$. This shows that $\rho(G)=2$
and therefore $\omega(G)=2$.
\end{proof}

Now we are in position to prove our second decomposition
theorem.

\begin{theorem} \label{thm:4decomp}
Let $G$ be a snark with oddness at least $4$, cyclic
connectivity $4$, and minimum number of vertices. Then $G$
contains a cycle-separating $4$-edge-cut $S$ such that both
components of $G-S$ can be extended to a cyclically
$4$-edge-connected snark by adding at most two vertices.
\end{theorem}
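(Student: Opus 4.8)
The plan is to start from the fact that $G$ is a snark of oddness at least $4$ and cyclic connectivity exactly $4$, so $G$ contains a cycle-separating $4$-edge-cut $S$ whose removal leaves two fragments $G_1$ and $G_2$. Since $S$ is a minimum-size cycle-separating cut, it is independent and each $G_i$ carries exactly four semiedges. The first observation is that at least one of $G_1, G_2$ must be colourable: if both were uncolourable, then each could be completed to a snark by restoring $3$-regularity, but more to the point we can combine the uncolourability with the oddness bound. Actually the cleaner route is to argue that if both components were uncolourable, $G$ would decompose into two smaller snarks, contradicting minimality of $G$ among snarks of oddness at least $4$ (each smaller snark already has oddness at least $2$, and we would expect to reassemble oddness at least $4$ from strictly smaller pieces). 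So I would first establish that both $G_1$ and $G_2$ are in fact colourable: by Proposition~\ref{prop:odd2}, if both were colourable we would get $\omega(G)\le 2$, a contradiction; hence at least one is uncolourable. The key structural claim is that I can choose the cut $S$ so that \emph{both} components are colour-open $4$-poles.

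Granting colourability of the components, by Proposition~\ref{prop:ext} each colourable component extends to a snark precisely when it is colour-open, and by Proposition~\ref{prop:iso} a colour-open $4$-pole is either isochromatic or heterochromatic. Since $G$ itself is uncolourable, the two components cannot share a common colouring type on $S$; combining this with the classification into types $1111, 1122, 1212, 1221$ and the Parity Lemma, I expect to deduce that one component is isochromatic and the other heterochromatic (this is exactly the dichotomy used in the proof of Proposition~\ref{prop:odd2}). By Proposition~\ref{prop:unique2}, the heterochromatic component extends to a snark by joining the two couples (adding no vertex) and the isochromatic one extends by adding two adjacent vertices. So far this only gives snark extensions; the real content of the theorem is that these extensions can be taken to be \emph{cyclically $4$-edge-connected}.

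The hard part will be upgrading the snark extensions to cyclically $4$-edge-connected snarks, because Example~\ref{ex:forced_completion} shows the colouring-forced completion may destroy cyclic connectivity: the heterochromatic component completed by junction can inherit a cycle-separating $2$-edge-cut. The plan is to invoke Theorem~\ref{thm:closure}, which guarantees that \emph{each} component extends to a cyclically $4$-edge-connected cubic graph $\tilde G_i$ by adding two adjacent vertices (regardless of colourings), together with Lemma~\ref{lemma:comparable2-cuts} which controls the nontrivial $2$-edge-cuts inside a component as nested (comparable) cuts. The crux is to play these two completions against each other: for whichever component the colouring-forced completion and the connectivity-forced completion disagree, I must show that using the \emph{connectivity} completion $\tilde G_i$ still yields a \emph{snark}. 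Here minimality of $G$ re-enters: if $\tilde G_i$ were colourable while being cyclically $4$-edge-connected, I would reattach it to the other side and, using the comparable-cut structure, produce a snark of oddness at least $4$ on fewer vertices than $G$, contradicting minimality. The delicate bookkeeping is to verify that adding two vertices to each side (at most two, using zero on the heterochromatic side when possible) keeps the vertex count strictly below that of $G$ on each piece and that oddness at least $4$ is genuinely forced to split across the cut.

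The main obstacle, and the step I would spend the most care on, is precisely the interaction between the two notions of completion — ensuring that when the cyclic-connectivity completion $\tilde G_i$ differs from the colour-forced completion, the resulting graph is still uncolourable (a snark). I anticipate this requires a careful Kempe-chain argument on $S$: one shows that any $3$-edge-colouring of $\tilde G_i$ would restrict to a colouring of the component of a type incompatible with the fixed type on the complementary side, and hence would extend to a $3$-edge-colouring of $G$ or reduce the oddness below $4$, contradicting either uncolourability or minimality of $G$. Lemma~\ref{lemma:res2odd2} (the equivalence of resistance $2$ and oddness $2$) will likely be the tool that lets me convert a near-colouring obtained by deleting two edges of $S$ into a statement about oddness, closing the loop between the colour-theoretic and the oddness hypotheses.
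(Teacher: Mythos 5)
Your setup misreads the colourability dichotomy, and this propagates into the rest of the plan. By Proposition~\ref{prop:odd2}, since $\omega(G)\ge 4$ \emph{at least one} of $G_1,G_2$ is uncolourable for \emph{every} cycle-separating $4$-edge-cut; so your ``key structural claim'' that $S$ can be chosen with both components colour-open is impossible, and the picture ``one side isochromatic, one side heterochromatic'' never occurs. The correct case split is: either both components are uncolourable (then Theorem~\ref{thm:closure} finishes immediately, since any cubic extension of an uncolourable $4$-pole is a snark --- there is no contradiction with minimality here, contrary to your first paragraph), or exactly one component is colourable. For that colourable component you still must prove it is colour-open before Propositions~\ref{prop:ext}--\ref{prop:unique2} apply at all; this is a genuinely nontrivial step (the paper's Claim~1), carried out via $\omega(\tilde G_1)=2$, Lemma~\ref{lemma:res2odd2}, Lemma~\ref{lemma:I-ext} and a Kempe-chain analysis of the edge $y_1y_2$, and your proposal simply assumes it.

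The second gap is in your strategy for restoring cyclic $4$-connectivity. By Proposition~\ref{prop:unique2} the snark extension of a colour-open $4$-pole is \emph{unique}: a heterochromatic component extends to a snark only by joining the couples (adding no vertices), so the connectivity-forced two-vertex completion $\tilde G_i$ of Theorem~\ref{thm:closure} is never a snark in that case, and ``playing the two completions against each other'' cannot work. The paper instead handles the heterochromatic case by assuming the zero-vertex extension fails to be cyclically $4$-edge-connected, replacing the component inside $G$ by the $10$-vertex heterochromatic $4$-pole cut from the Petersen graph, and proving (via a parity/Kempe argument and a separate connectivity argument) that the resulting smaller graph still has oddness at least $4$ and cyclic connectivity $4$, contradicting minimality. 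In the isochromatic case the obstruction of Example~\ref{ex:forced_completion} is avoided not by comparing completions but by \emph{re-choosing the cut}: one passes to an atom $K$ inside the colourable side, and Proposition~\ref{prop:cc-properties}(ii) gives $\zeta(K)\ge 3$, which is exactly what makes the two-vertex extension cyclically $4$-edge-connected. Neither the Petersen substitution nor the atom refinement appears in your proposal, and these are the two ideas that actually close the argument.
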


In fact, we prove the following stronger and more detailed
result which will also be needed in our next paper \cite{GMS}.

\begin{theorem} \label{thm:4decomp-detailed}
Let $G$ be a snark with oddness at least $4$, cyclic
connectivity $4$, and minimum number of vertices. Let $S$ be a
cycle-separating $4$-edge-cut in $G$ whose removal leaves
components $G_1$ and $G_2$. Then, up to permutation of the
index set $\{1,2\}$, exactly one of the following occurs.
\begin{itemize}
\item[{\rm (i)}] Both $G_1$ and $G_2$ are uncolourable, in
    which case each of them can be extended to a
    cyclically $4$-edge-connected snark by adding two
    vertices.
\item[{\rm (ii)}]  $G_1$ is uncolourable and $G_2$ is
    heterochromatic, in which case $G_1$ can be extended
    to a cyclically $4$-edge-connected snark by adding two
    vertices, and $G_2$ can be extended to a cyclically
    $4$-edge-connected snark by adding two isolated edges.
\item[{\rm (iii)}]$G_1$ is uncolourable and $G_2$ is
    isochromatic, in which case $G_1$ can be extended to a
    cyclically $4$-edge-connected snark by adding two
    vertices, and $G_2$ can be extended to a
    cyclically $4$-edge-connected snark by adding two
    vertices, except possibly
    $\zeta(G_2)=2$. In the latter case, $G_2$ is a partial
    junction of two colour-open $4$-poles, which may be
    isochromatic or heterochromatic in any combination.
\end{itemize}
\end{theorem}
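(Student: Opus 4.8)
The plan is to read off the colourability of the two components from the oddness hypothesis, and then to turn each component into a cyclically $4$-edge-connected snark using the completion results of Section~\ref{sec:prelim}, checking at each stage that the required connectivity survives.

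First I would pin down which components are colourable. Since $G$ is a snark with $\omega(G)\ge 4>2$, the contrapositive of Proposition~\ref{prop:odd2} applied to $S$ shows that $G_1$ and $G_2$ cannot both be $3$-edge-colourable; relabelling, I may assume $G_1$ is uncolourable. This is what the clause ``up to permutation of $\{1,2\}$'' records, and it already places us in one of the three listed situations according to whether $G_2$ is uncolourable or colourable. In the colourable case the key point — and the first place where minimality is used — is that $G_2$ must be colour-open rather than colour-closed: a colour-closed colourable component (for instance the $4$-pole $C_4$ obtained from a $4$-cycle) would produce a reducible configuration in $G$ whose reduction yields a strictly smaller snark of oddness at least $4$ and cyclic connectivity $4$, contradicting the choice of $G$. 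Once $G_2$ is known to be colour-open, Proposition~\ref{prop:iso} tells us it is isochromatic or heterochromatic but not both, and that it is isochromatic precisely when it admits a colouring of type $1111$; this both produces cases (ii) and (iii) and shows the three alternatives are mutually exclusive, since an uncolourable, a heterochromatic, and an isochromatic $4$-pole are distinguished by whether they admit no colouring, a colouring of type $1111$, or neither.

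Next I would carry out the completions. For an uncolourable component I would use the construction of Theorem~\ref{thm:closure}, attaching two adjacent vertices; the restriction of any proper colouring of the enlarged graph to the original sub-$4$-pole would colour that component, so the enlarged graph inherits uncolourability and is a snark, while Theorem~\ref{thm:closure} guarantees it is cyclically $4$-edge-connected. This disposes of $G_1$ in every case and of $G_2$ in case (i). For a heterochromatic $G_2$ (case (ii)) the unique snark completion of Proposition~\ref{prop:unique2} joins the two couples across the cut by a pair of independent edges; by Lemma~\ref{lemma:comparable2-cuts} all nontrivial $2$-edge-cuts of $G_2$ split its four cut-vertices along one fixed pairing, and when the couples cross that pairing these two new edges bridge every such $2$-cut and no cut of size smaller than $4$ can survive, so the completion is a cyclically $4$-edge-connected snark; I expect the remaining (aligned) possibility to be ruled out by minimality.

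The genuinely delicate case, and the one I expect to be the main obstacle, is the isochromatic case (iii), exactly because — as Example~\ref{ex:forced_completion} shows — the completion forced by the colouring need not be the one that preserves connectivity. Here Proposition~\ref{prop:unique2} dictates that the only snark completion attaches two adjacent vertices $y_1,y_2$ to the two couples. If $\zeta(G_2)\ge 3$ then $G_2$ has no nontrivial $2$-edge-cut, and any cut of size less than $4$ in the completion would, after removing the edge $y_1y_2$, descend either to a $2$-edge-cut of $G_2$ or to a cycle-separating cut of size less than $4$ in $G$ itself — both impossible — so the completion is cyclically $4$-edge-connected, as required. The exceptional behaviour occurs when $\zeta(G_2)=2$: a nontrivial $2$-edge-cut of $G_2$ together with $y_1y_2$ can form a cycle-separating $3$-edge-cut precisely when the couples align with the pairing that, by Lemma~\ref{lemma:comparable2-cuts}, this $2$-cut imposes on the four cut-vertices. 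In that situation I would cut $G_2$ along the $2$-edge-cut, writing $G_2=M_1*M_2$ as a partial junction of two $4$-poles; each $M_i$ is colourable because it inherits colourings from $G_2$, and a further appeal to minimality (ruling out colour-closed pieces exactly as above) shows each $M_i$ is colour-open, of either type, which yields the stated description of $G_2$. The hardest parts are thus the minimality-driven exclusion of colour-closed (reducible) components and the reconciliation, in the isochromatic case, of the colouring-forced completion with the connectivity-forced one.
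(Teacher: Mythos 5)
Your skeleton matches the paper's (Proposition~\ref{prop:odd2} to locate the uncolourable component, Theorem~\ref{thm:closure} for the uncolourable side, Proposition~\ref{prop:unique2} plus the isochromatic/heterochromatic split for the colourable side), but the two places you yourself flag as "the hardest parts" are exactly where your argument has genuine gaps, and the missing content is the bulk of the paper's proof. First, colour-openness of $G_2$: you assert that a colour-closed component "would produce a reducible configuration whose reduction yields a strictly smaller snark of oddness at least $4$ and cyclic connectivity $4$," but you never say what the reduction is, nor why the reduced graph keeps oddness at least $4$ and keeps cyclic connectivity $4$ --- and that preservation is precisely the hard part. The paper argues instead: by minimality $\omega(\tilde G_1)=2$, hence $\rho(\tilde G_1)=2$ by Lemma~\ref{lemma:res2odd2}, giving two nonadjacent edges $e_1,e_2$ whose deletion makes $\tilde G_1$ colourable; one of them must be the added edge $y_1y_2$ (otherwise a common colouring type of $G_1(e_1,e_2)$ and $G_2$ would force $\rho(G)=2$); then a minimum $4$-edge-colouring argument using Theorem~\ref{thm:parity2} and $1$-$2$-Kempe chains through $y_1,y_2$ produces, against each of the three remaining types of $G_2$, a $3$-edge-colouring of $G$ minus two edges, contradicting $\omega(G)\ge 4$. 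Nothing in your sketch substitutes for this analysis; note that the reduction you presumably intend (replacing $G_2$ by a small colour-closed $4$-pole such as $C_4$) recreates the same difficulty, since proving the smaller graph still has oddness at least $4$ needs exactly this kind of colouring argument.

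Second, the heterochromatic case. Your cut-counting via Lemma~\ref{lemma:comparable2-cuts} covers only the situation where the couples cross the pairing induced by the $2$-cuts; for the aligned situation you write "I expect the remaining possibility to be ruled out by minimality." That expectation \emph{is} the theorem here: the paper rules it out by replacing $G_2$ inside $G$ with the heterochromatic $4$-pole $H$ of order $10$ obtained from the Petersen graph (possible because a non-cyclically-$4$-edge-connected $\bar G_2$ cannot be the Petersen graph, so $G_2$ has at least $12$ vertices), and then proving two substantial claims: that the substituted graph $G'$ still satisfies $\omega(G')\ge 4$ (a distribution-and-Kempe-chain analysis over the four possible colour distributions $D_1,\dots,D_4$ on $S$, using that $G_2$ and $H$ are colour-equivalent) and that $\zeta(G')=4$ (using cyclic $5$-edge-connectivity of the Petersen graph). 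Without these two arguments case (ii) is simply unproved. (A small slip besides: your exclusivity remark inverts Proposition~\ref{prop:iso} --- it is the isochromatic, not the heterochromatic, $4$-pole that admits a colouring of type $1111$.)
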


\begin{proof}
Let $G$ be a snark  with $\omega(G)\ge 4$,  $\zeta(G)=4$, and
with minimum number of vertices. Let $S=\{s_1,s_2,s_3,s_4\}$ be
an arbitrary fixed cycle-separating 4-edge-cut in $G$, and let
$G_1$ and $G_2$ be the components of $G-S$. According to
Proposition~\ref{prop:odd2}, at least one of $G_1$ and $G_2$ is
uncolourable. If both $G_1$ and $G_2$ are uncolourable, we can
extend each of them to a cyclically $4$-edge-connected snark by
applying Theorem~\ref{thm:closure}, establishing (i). For the
rest of the proof we may therefore assume that $G_2$ is
colourable and $G_1$ is not. Again, $G_1$ can be extended to a
cyclically $4$-edge-connected snark by
Theorem~\ref{thm:closure}. Let $\tilde G_1$ be an extension of
$G_1$ to a cyclically $4$-edge-connected snark by adding two
adjacent vertices $y_1$ and $y_2$ according to
Theorem~\ref{thm:closure}. Without loss of generality we may
assume that the vertex $y_1$ is incident with the edges $s_1$
and $s_2$ while $y_2$ is incident with $s_3$ and $s_4$.

As regards $G_2$, we prove that either (ii) or (iii) holds. Our
first step in this direction is showing that $G_2$ can be
extended to a snark. In view of Proposition~\ref{prop:ext}, this
amounts to verifying that $G_2$ is colour-open.

\medskip\noindent
Claim 1. \textit{The $4$-pole $G_2$ is colour-open.}

\medskip\noindent
Proof of Claim~1. Suppose to the contrary that $G_2$ is not
colour-open. This means that it has at least three types of
colourings. Since $G$ is a smallest cyclically
$4$-edge-connected snark with oddness at least $4$ and $\tilde
G_1$ is a cyclically $4$-edge-connected snark with fewer
vertices than $G$, we infer that $\omega(\tilde G_1)=2$. By
Lemma~\ref{lemma:res2odd2}, there exist two nonadjacent edges
$e_1$ and $e_2$ in $\tilde{G}_1$ such that $\tilde
G_1-\{e_1,e_2\}$ is colourable.  Equivalently, by
Lemma~\ref{lemma:I-ext}, the cubic graph
$\tilde{G}_1{(e_1,e_2)}$ is colourable.

We claim that the edge $y_1y_2$ is one of $e_1$ and $e_2$.
Suppose not. Then both $e_1$ and $e_2$ have at least one
end-vertex in~$G_1$. As mentioned, $\tilde{G}_1{(e_1,e_2)}$ is
a colourable cubic graph. Hence $G_1{(e_1,e_2)}$ is a
colourable $4$-pole, and therefore it has at least two types of
colourings. Since $G_2$ has at least three of the four types,
both $G_1{(e_1,e_2)}$ and $G_2$ admit colourings of the same
type. These colourings can be combined into a colouring of
$G{(e_1,e_2)}$, implying that $G-\{e_1,e_2\}$ is also
colourable. However, from Lemma~\ref{lemma:res2odd2} we get
that $\omega(G)=2$, which is a contradiction proving that one
of $e_1$ and $e_2$ coincides with $y_1y_2$.

Assuming that $y_1y_2=e_1$, let us consider a minimum
$4$-edge-colouring $\phi_1$ of $\tilde{G}_1$ where $e_1$ and
$e_2$ are the only edges of $\tilde{G}_1$ coloured $0$.
Theorem~\ref{thm:parity2} implies that there exist a
unique non-zero colour that is repeated at both $e_1$ and
$e_2$. Without loss of generality we may assume that the
repeated colour is $1$ and that $\phi_1(s_1)=\phi_1(s_3)=1$,
$\phi_1(s_2)=2$, and $\phi_1(s_4)=3$. In this situation, $G_2$
cannot have a colouring of type $1212$ for otherwise we could
combine this colouring with $\phi_1$ to produce a
$3$-edge-colouring of $G-\{e_2,s_4\}$, which is impossible
since $\omega(G)\ge 4$. Therefore $G_2$ has colourings of all
the remaining three types $1111$, $1122$, and $1221$.

Consider the $1$-$2$-Kempe chain $P$ in $\tilde G_1$ with
respect to the colouring $\phi_1$ beginning at the vertex
$y_2$. Clearly, the other end of $P$ must be the end-vertex of
$e_2$ incident with edges of colours $1$, $3$, and $0$. If $P$
does not pass through the vertex $y_1$, we switch the colours
on $P$ producing a $4$-edge-colouring $\phi_1'$ of $\tilde G_1$
where $\phi_1'(s_1)=1$, $\phi_1'(s_2)=\phi_1'(s_3)=2$, and
$\phi_1'(s_4)=3$. However, $\phi_1'$ can be combined with a
colouring of $G_2$ of type $1221$ to obtain a
$3$-edge-colouring of $G-\{e_2,s_4\}$, which is impossible
since $\omega(G)\ge 4$. If $P$ passes through $y_1$, we switch
the colours only on the segment $P_0$ between $y_2$ and $y_1$,
producing an improper colouring $\phi_1''$ of $\tilde G_1$ with
$y_1$ being its only faulty vertex. Depending on whether $P_0$
ends with an edge coloured $1$ or $2$ we get
$\phi_1''(s_1)=\phi_1''(s_2)=\phi_1''(s_3)=2$ and
$\phi_1''(s_4)=3$, or $\phi_1''(s_1)=\phi_1''(s_2)=1$,
$\phi_1''(s_3)=2$, and $\phi_1''(s_4)=3$. In the latter case we
can combine $\phi_1''$ with a colouring of $G_2$ of type
$1122$, producing a $3$-edge-colouring of $G-\{e_2,s_4\}$. In
the former case we first interchange the colours $1$ and $2$ on
$G_1$ and then combine the resulting colouring with a colouring
of $G_2$ of type $1111$, again producing a $3$-edge-colouring
of $G-\{e_2,s_4\}$. Since $\omega(G)\ge 4$, in both cases we
have reached a contradiction. This establishes Claim~1.

\medskip

Proposition~\ref{prop:unique2} now implies that $G_2$ can be
extended to a snark $\bar G_2$ by adding at most two vertices.
Recall that such an extension is unique up to isomorphism and
depends only on whether $G_2$ is isochromatic or
heterochromatic. We discuss these two cases separately.

\medskip\noindent
\textbf{Case 1.} \textit{$G_2$ is isochromatic.} First note
that in this case $\bar G_2$ arises from $G_2$ by adding two
new vertices $x_1$ and $x_2$ joined by an edge and by attaching
each of the new vertices to the semiedges in the same couple.
From Proposition~\ref{prop:cc-properties}~(i) we get that
$\zeta(G_2)\ge 2$. If $\zeta(G_2)\ge 4$, then the same
obviously holds for $\bar{G_2}$. Assume that $\zeta(G_2)=3$,
and let $A$ denote the set of end-vertices in $G_2$ of the
edges of the edge-cut $S$. Note that $|A|=4$ because $S$ is
independent. Since $\zeta(G)=4$, every cycle separating
$3$-edge-cut $R$ in $G_2$ has the property that each component
of $G_2-R$ contains at least one vertex from $A$. This readily
implies that $\zeta(\bar G_2)\ge4$ and establishes the
statement (iii) whenever $\zeta(G_2)\ge 3$. It remains to
consider the case where $\zeta(G_2)=2$. Let $U$ be a
cycle-separating 2-edge-cut in $G_2$ and let $Q_1$ and $Q_2$ be
the components of $G_2-U$. Since $G$ is cyclically
$4$-edge-connected, each $Q_i$ contains exactly two vertices
from $A$ and thus both $Q_1$ and $Q_2$ are 4-poles. Each $Q_i$
is colourable because any $3$-edge-colouring of $G_2$ provides
one for $Q_i$. Furthermore, each $Q_i$ is colour-open, because
$G_2$ and hence also $Q_i$ has an extension to $\bar G_2$. Thus
$G_2$ is a partial junction of two colour-open $4$-poles. It is
not difficult to show that an isochromatic $4$-pole can arise
from a partial junction of any combination of isochromatic and
heterochromatic $4$-poles, as claimed.

\medskip

\medskip\noindent
\textbf{Case 2.} \textit{$G_2$ is heterochromatic.} In this
case $G_2$ arises from a snark by severing two independent
edges. Suppose to the contrary that $\bar G_2$ is not
cyclically $4$-edge-connected. Then $G_2$ has at least twelve
vertices, because there is only one $2$-edge-connected snark of
order less than twelve -- the Petersen graph  -- and its cyclic
connectivity equals $5$. Let us take a heterochromatic $4$-pole
$H$ of order $10$ obtained from the Petersen graph and
substitute $G_2$ in $G$ with $H$, creating a new cubic
graph~$G'$. Clearly, $G'$ is a snark of order smaller than $G$.
To derive a final contradiction with the minimality of $G$ we
show that $G'$ is cyclically 4-edge-connected and has oddness
at least~$4$.

\medskip\noindent
Claim 2. \textit{$\omega(G')\ge 4$}.

\medskip\noindent
Proof of Claim~2. Suppose to the contrary that $\omega(G')<4$.
Since $G_1$ is uncolourable and $G_1\subseteq G'$, we infer
that $\omega(G')=2$ which in turn implies that $\rho(G')=2$.
Therefore there exist edges $e_1$ and $e_2$ in $G'$ such that
$G'-\{e_1,e_2\}$ is colourable. In other words, $G'$ has a
minimum $4$-edge-colouring $\psi$ where $e_1$ and $e_2$ are the
only edges of $G'$ coloured $0$.

Since $G_1$ is uncolourable, at least one of $e_1$ and $e_2$
must have both end-vertices in~$G_1$. Without loss of
generality assume that at $e_1$ has both end-vertices in $G_1$.
If $e_2$ had at least one end-vertex in $G_1$, we could take a
$3$-edge-colouring of $G'-\{e_1,e_2\}$, remove $H$ and
reinstate $G_2$ coloured in such a way that the edges in
$S-\{e_2\}$ receive the same colours from $G_2$ as they did
from $H$; this is possible since $G_2$ and $H$ are
colour-equivalent. However, in this way we would produce
a $3$-edge-colouring of $G-\{e_1,e_2\}$, contrary to the
assumption that $\omega(G)=4$. Therefore $e_2$ has both ends in
$H$.

Since $H$ is heterochromatic, the edges of $S$ can be
partitioned into couples such that for every $3$-edge-colouring
of $H$ the colours of both edges within a couple are always
different. Let $\{s_i,s_j\}$ and $\{s_k,s_l\}$ be the couples
of $H$. Further, since $\psi$ is a minimum $4$-edge-colouring
of $G'$, all three non-zero colours are present on the edges
adjacent to each of $e_i$, one of the colours being
represented twice. By Theorem~\ref{thm:parity2}, the same colour
occurs twice at both $e_1$ and $e_2$, say colour $1$. If we
regard $\psi$ as a $\mathbb{Z}_2\times\mathbb{Z}_2$-valuation
and sum the outflows from vertices of $G_1$ we see that the
flow through $S$ equals
$\psi(s_1)+\psi(s_2)+\psi(s_3)+\psi(s_4)=1$. Hence, the
distribution of colours in the couples of $S$, the set
$\{\{\psi(s_i),\psi(s_j)\},\{\psi(s_k),\psi(s_l)\}\}$, must
have one of the following four forms:
\begin{itemize}
\item[]
$D_1=\{\{1,1\},\{2,3\}\}$

\item[]
$D_2=\{\{1,2\},\{1,3\}\}$

\item[]
$D_3=\{\{2,2\},\{2,3\}\}$

\item[]
$D_4=\{\{2,3\},\{3,3\}\}$.
\end{itemize}

We now concentrate on the restriction of $\psi$ to $G_1$ and
show that it can be modified to a $4$-edge-colouring $\lambda$
of $G_1$ with distribution either $D_2$ or $D_3$. If the
colouring $\psi$ of $G'$ has distribution $D_4$, we can simply
interchange the colours $2$ and $3$ to obtain the
distri\-bution~$D_3$. Assume that $\psi$ has distribution
$D_1$. Let us consider the unique end-vertex $u$ of $e_1$ in
$G_1$ such that the edges incident with $u$ receive colours
$1$, $3$, and~$0$ from $\psi$. The $1$-$2$-Kempe chain $P$
starting at $u$ ends with a vertex incident with $e_2$, which
means that $P$ traverses~$S$. Let $s$ be the first edge of $S$
that belongs to $P$. If $\psi(s)=1$, then the desired
$4$-edge-colouring $\lambda$ of $G_1$ with distribution $D_2$
can be obtained by the Kempe switch on the segment of $P$ that
ends with $s$ and by a subsequent permutation of colours
interchanging $1$ and $2$. If $\psi(s)=2$, then a
$4$-edge-colouring of $G_1$ with distribution $D_3$ can be
obtained similarly. In both cases, $e_1$ is the only edge
coloured $0$ under $\lambda$.

If $\lambda$ has distribution $D_2$, then $\lambda$ and a
$3$-edge-colouring of $H$ of type $1212$ can be combined to a
$3$-edge-colouring of $G'-\{e_1,s_l\}$. However, as observed
earlier, by removing $H$ and reinstating $G_2$ we could produce
a $3$-edge-colouring of $G-\{e_1,s_l\}$, which is impossible
because $\omega(G)\ge 4$. If $\lambda$ has distribution $D_3$,
we can similarly combine $\lambda$ with a $3$-edge-colouring of
$H$ of type $1221$ to a $3$-edge-colouring of $G'-\{e_1,s_i\}$
which is impossible for the same reason. This contradiction
completes the proof of Claim~2.

\medskip\noindent
Claim 3. \textit{ $\zeta(G')=4$.}

\medskip\noindent
Proof of Claim~3. Suppose to the contrary that $\zeta(G')<4$.
Let $S'$ be a minimum size cycle-separating edge-cut in $G'$.
If all the edges of $S'$ had at least one end vertex in $G_1$,
then $S'$ would be a cycle-separating cut also in $G$, which is
impossible. Therefore at least one edge of $S'$ has both ends
in $H$, which means that $S'$ intersects $H$. Since $H$ is
connected, we conclude that $S'_H=S'\cap E(H)$ is an edge-cut
of $H$. Note that $S'_H$ is an independent set of edges, so
$S'_H$ must be a cycle-separating edge-cut in $H$. Recall,
however, that $H$ arises from the Petersen graph by severing
two independent edges $e$ and~$f$. It follows that
$S'_H\cup\{e,f\}$ is a cycle-separating edge-cut in the
Petersen graph. Hence, $|S'_H\cup\{e,f\}|\ge 5$, and
consequently $3\le |S'_H|\le |S'|\le 3$. This shows that
$S'_H=S'$ and therefore $S'$ is completely contained in $H$; in
particular $S'\cap S=\emptyset$. Because $S'$ is an edge-cut of
the entire $G'$, all the edges of $S$ must join $G_1$ to the
same component of $H-S'$. On the other hand, the Petersen graph
is cyclically $5$-edge-connected, therefore both $e$ and $f$
have end-vertices in different components of $H-S'$. The way
how $G'$ was constructed from $G$ now implies that the set of
end-vertices of $S$ in $H$ coincides with the set of
end-vertices of $e$ and $f$. Therefore $S$ has an end-vertex in
each component of $H-S'$, contradicting the previous
observation. This contradiction establishes Claim~3 and
concludes the entire proof.
\end{proof}

We proceed to proving our second decomposition theorem.

\medskip\noindent
\textit{Proof of Theorem~\ref{thm:4decomp}.} Let $G$ be a snark
with oddness at least $4$, cyclic connectivity~$4$, and minimum
number of vertices. If $G$ contains a cycle-separating
$4$-edge-cut whose removal leaves either two uncolourable
components or one uncolourable component and one
heterochromatic component, then the conclusion follows directly
from Theorem~\ref{thm:4decomp-detailed}~(i) or (ii),
respectively. Otherwise one of the components is uncolourable
and the other one, denoted by $G_2$, is isochromatic. In this
case, $G_2$ contains a subgraph $K$ which is an atom, possibly
$K=G_2$. Clearly, $K$ is colourable and $\delta_G(K)$ is a
cycle-separating $4$-edge-cut. If $K$ is heterochromatic, then
the conclusion again follows from
Theorem~\ref{thm:4decomp-detailed}~(ii). Therefore we may
assume that $K$ is isochromatic. Since $4=\zeta(G)<5\le g(G)$,
we see that $K$ is a nontrivial atom and from
Proposition~\ref{prop:cc-properties}~(ii) we infer that
$\zeta(K)\ge 3$. Applying statement~(iii) of
Theorem~\ref{thm:4decomp-detailed} with $S=\delta_G(K)$ we
finally get the desired result. \hfill $\qed$

\newpage

\section{Main result}\label{sec:MainResult}

We are now ready to prove our main result.

\mainthm*

\begin{proof}
Let $G$ be a snark with oddness at least $4$, cyclic
connectivity $4$, and minimum order. We first prove that $G$
has girth at least $5$. By Proposition~\ref{prop:girth}, the
girth of $G$ is at least $4$. Suppose to the contrary that $G$
contains a $4$-cycle $C$, and let $S$ be the edge-cut
separating $C$ from the rest of $G$. Since $S$ is
cycle-separating, it has to satisfy one of the statements
(i)--(iii) of Theorem~\ref{thm:4decomp-detailed}. In the
notation of Theorem~\ref{thm:4decomp-detailed}, $C$ necessarily
plays the role of $G_2$, because it is colourable. In
particular,  $S$ does not satisfy (i). However, $S$ satisfies
neither (ii) because $G_2$ is not heterochromatic, nor (iii)
since $G_2$ is not isochromatic. Thus we have reached a
contradiction proving that the girth of $G$ is at least~$5$.

In Figure~\ref{fig:LMMS44} we have displayed a snark with
oddness at least $4$, cyclic connectivity $4$ on $44$ vertices.
It remains to show that there are no snarks of oddness at least
$4$ and cyclic connectivity $4$ with fewer than $44$ vertices.

Our main tool is Theorem~\ref{thm:4decomp}. It implies that
every snark with oddness at least $4$, cyclic connectivity~$4$,
and minimum number of vertices can be obtained from two smaller
cyclically $4$-edge-connected snarks $G_1$ and $G_2$ by the
following process:
\begin{itemize}
\item Form a $4$-pole $H_i$ from each $G_i$ by either
    removing two adjacent vertices or two nonadjacent edges
    and by retaining the dangling edges.
\item Construct a cubic graph $G$ by identifying the
    dangling edges of $H_1$ with those of $H_2$ after
    possibly applying a permutation to the dangling edges
    of $H_1$ or $H_2$.
\end{itemize}
Any graph $G$ obtained in this manner will be called a
\textit{$4$-join} of $G_1$ and $G_2$. Note that the well-known
operation of a \textit{dot product} of snarks~\cite{AVT,
Isaacs} is a special case of a $4$-join.

We proceed to proving that every snark with cyclic connectivity
$4$ on at most $42$ vertices has oddness $2$. If $G$ is a snark
with cyclic connectivity $4$ on at most $42$ vertices, then by
Theorem~\ref{thm:4decomp} it contains a cycle-separating
$4$-edge-cut $S$ such that both components $K_1$ and $K_2$ of
$G-S$ can be extended to snarks $G_1$ and $G_2$, respectively,
by adding at most two vertices; in other words, $G$ is a
$4$-join of $G_1$ and $G_2$. Clearly,
$|V(K_1)|+|V(K_2)|=|V(G)|\le 42$. Assuming that $|V(K_1)|\le
|V(K_2)|$ we see that $|V(K_1)|\ge 8$, because the smallest
cyclically $4$-edge-connected snark has $10$ vertices, and
hence $|V(K_2)|\le 34$. Therefore both $G_1$ and $G_2$ have
order at least 10 and at most 36.

Let $\mathcal{S}_{n}$ denote the set of all pairwise
non-isomorphic cyclically $4$-edge-connected snarks of order
not exceeding $n$. To finish the proof it remains to show that
every $4$-join of two snarks from $\mathcal{S}_{36}$ with at
most 42 vertices has oddness $2$. Unfortunately, verification
of this statement in a purely theoretical way is far beyond
currently available methods. The final step of our proof has
been therefore performed by a computer.

We have written a program which applies a $4$-join in all
possible ways to two given input graphs and have applied this
program to the complete list of snarks from the set
$\mathcal{S}_{36}$. More specifically, given an arbitrary pair
of input graphs, the program removes in all possible ways
either two adjacent vertices or two nonadjacent edges from each
of the graphs (retaining the dangling edges) and then
identifies the dangling edges from the first graph in the pair
with the dangling edges of the second graph, again in all
possible (i.e., $4! = 24$) ways. We also use the \textit{nauty}
library~\cite{nauty-website, mckay_14} to determine the orbits
of edges and edge pairs in the input graphs, so the program
only removes two adjacent vertices or two nonadjacent edges
once from every orbit of edges or edge pairs, respectively. The
resulting graphs can still contain isomorphic copies, therefore
we also use \textit{nauty} to compute a canonical labelling of
the graphs and remove the isomorphic copies.

Until now, only the set $\mathcal{S}_{34}$
has been known; it was determined by Brinkmann, H\"agglund,
Markstr\"om, and the first author~\cite{BrGHM} in 2013 and was
shown to contain exactly $27~205~766$ snarks. Using the program
\textit{snarkhunter}~\cite{BrGM, BrGHM} we have been able to
generate all cyclically 4-edge-connected snarks on 36 vertices,
thereby completing the determination of $\mathcal{S}_{36}$.
This took about 80 CPU years and yielded exactly $404~899~916$
such graphs. The size of $\mathcal{S}_{36}$ thus totals to
$432~105~682$ graphs. (The new list of snarks can  be
downloaded from the \textit{House of Graphs}~\cite{BCGM} at
\url{http://hog.grinvin.org/Snarks}).

Finally, we have performed all possible $4$-joins of two snarks
from $\mathcal{S}_{36}$ that produce a snark with  at most $42$
vertices and checked their oddness. This computation required
approximately 75 CPU days. We have used two independent
programs to compute the oddness of the resulting graphs (the
source code of these programs can be obtained
from~\cite{oddness-site}) and in each case the results of both
programs were in complete agreement. No snark of oddness
greater than $2$ among them was found, which completes the
proof of Theorem~\ref{thm:main}.
\end{proof}

\section{Remarks and open problems}\label{sec:remarks}

We have applied the $4$-join operation to all valid pairs of
snarks from $\mathcal{S}_{36}$ to construct cyclically
$4$-edge-connected snarks on $44$ vertices and checked their
oddness. In this manner we have produced $31$ cyclically
$4$-edge-connected snarks of oddness $4$, including the one
from Figure~\ref{fig:LMMS44}, all of them having girth $5$. The
most symmetric of them is shown in Figure~\ref{fig:sym44}. We
will describe and analyse these 31 snarks in the sequel of this
paper~\cite{GMS}, where we also prove that they constitute a
complete list of all snarks with oddness at least $4$, cyclic
connectivity $4$, and minimum number of vertices.

\begin{figure}[htbp]
	\centering
	\includegraphics[width=0.4\textwidth]{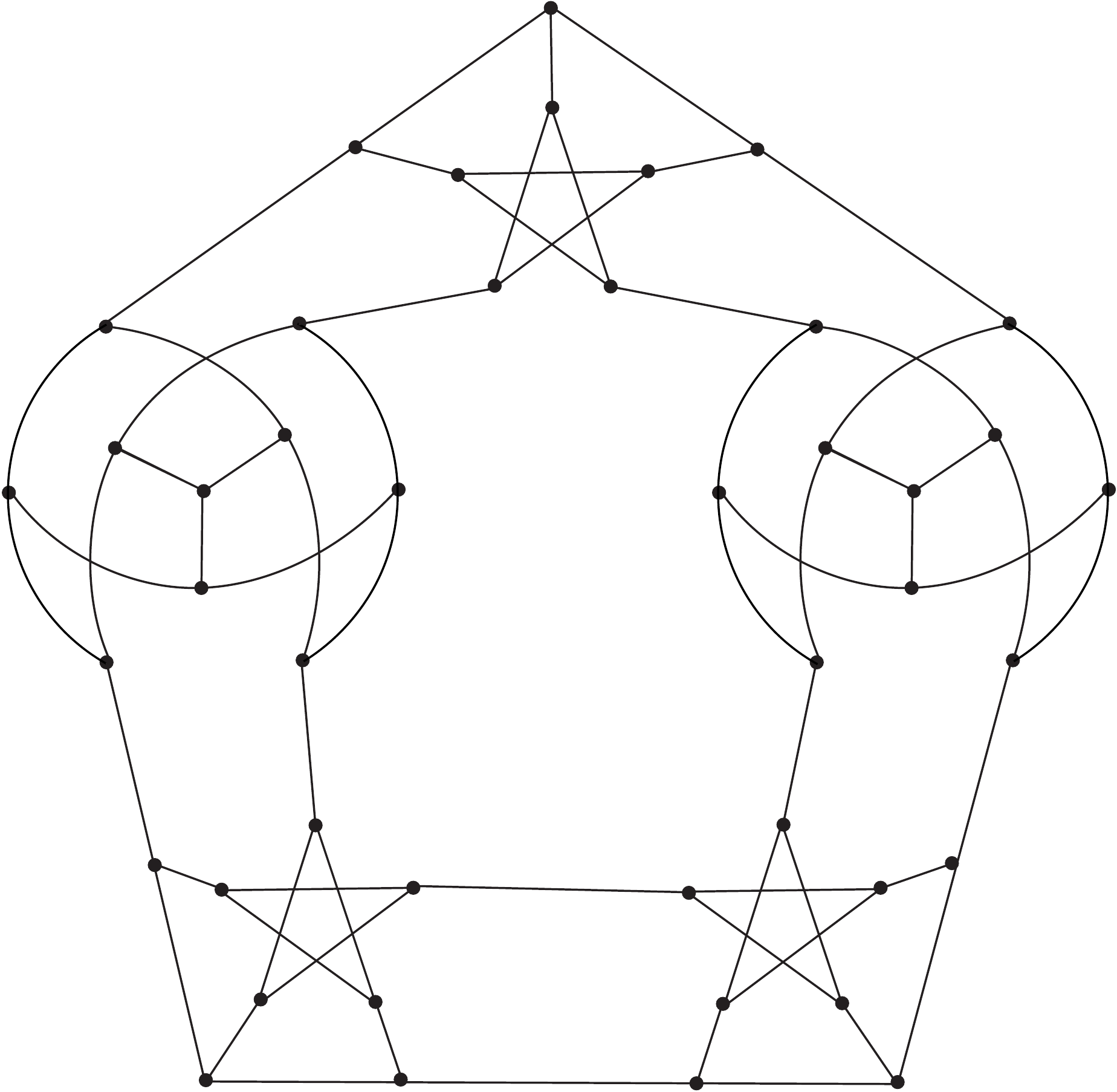}
  \caption{The most symmetric nontrivial snark of oddness $4$ on 44 vertices.}
  \label{fig:sym44}
\end{figure}

As we have already mentioned in Section~\ref{sec:intro} Introduction,
Theorem~\ref{thm:main} does not yet determine the smallest
order of a nontrivial snark with oddness~$4$, because there
might exist snarks with oddness at least $4$ of order 38, 40,
or 42 with cyclic connectivity greater than~$4$. Furthermore,
it is not immediately clear why a snark $G$ with $\omega(G)\ge
m$ and minimum order should have oddness exactly $m$. This
situation suggests two natural problems which require the
following definition: Given integers $\omega\ge 2$ and $k\ge
2$, let $m(\omega,k)$ denote the minimum order of a cyclically
$k$-edge-connected snark with oddness at least~$\omega$. For
example, one has $m(2,2)=m(2,3)=m(2,4)=m(2,5)=10$ as
exemplified by the Petersen graph, and $m(2,6)=28$ as
exemplified by the Isaacs flower snark $J_7$. The values
$m(2,k)$ for $k\ge 7$ are not known, however the well-known
conjecture of Jaeger and Swart~\cite{JS} that there are no
cyclically $7$-edge-connected snarks would imply that these
values are not defined. For $\omega=4$, Lukot\!'ka et
al.~\cite[Theorem~12]{LMMS} showed that $m(4,2)=m(4,3)=28$. The
value $m(4,4)$ remains unknown although our
Theorem~\ref{thm:main} seems to suggest that $m(4,4)=44$.

\bigskip

\noindent\textbf{Problem~1.}
Determine the value $m(4,4)$.

\medskip

Our second problem asks whether the function $m(\omega,k)$ is
monotonous in both coordinates.

\medskip

\noindent\textbf{Problem~2.} Is it true that $m(\omega+1,k)\ge
m(\omega,k)$ and $m(\omega,k+1)\ge m(\omega,k)$ whenever the
involved values are defined?

\medskip

\section{Testing conjectures}

After having generated all snarks from the set
$\mathcal{S}_{34}$ and those from $\mathcal{S}_{36}$ that have
girth at least~$5$, Brinkmann et al.~\cite{BrGHM} tested the
validity of several important conjectures whose minimal
counterexamples, provided that they exist, must be snarks. For
most of the considered conjectures the potential minimal
counterexamples are proven to be nontrivial snarks, that is,
those with cyclic connectivity at least $4$ and girth at
least~$5$. Nevertheless, in some cases the girth condition has
not been established. Therefore it appears reasonable to check
the validity of such conjectures on the set
$\mathcal{S}_{36}$-$\mathcal{S}_{34}$ of all cyclically
$4$-edge-connected snarks of order $36$. We have performed
these tests and arrived at the conclusions discussed below; for
more details on the conjectures we refer the reader
to~\cite{BrGHM}.

\medskip

A \textit{dominating} circuit in a graph $G$ is a circuit $C$
such that every edge of $G$ has an end-vertex on $C$.
Fleischner~\cite{F84} made the following conjecture on
dominating cycles.

\begin{conjecture}[{\rm Dominating circuit conjecture}]
\label{conj:doms}
Every cyclically $4$-edge-connected snark has a dominating
circuit.
\end{conjecture}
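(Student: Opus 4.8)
The plan is to attempt a proof by minimal counterexample that exploits the decomposition machinery developed in Section~\ref{sec:decomp}. Suppose, for contradiction, that some cyclically $4$-edge-connected snark has no dominating circuit, and let $G$ be one of smallest order. The first step is a dichotomy on the cyclic connectivity of $G$. If $\zeta(G)=4$, then $G$ carries a cycle-separating $4$-edge-cut $S$, and Theorem~\ref{thm:closure} lets me extend the two components $G_1,G_2$ of $G-S$ to smaller cyclically $4$-edge-connected cubic graphs by adding two adjacent vertices to each; using the snark-specific refinement of Theorem~\ref{thm:4decomp-detailed} (or the heterochromatic/isochromatic bookkeeping there) one arranges that the extensions $\tilde G_1,\tilde G_2$ are again cyclically $4$-edge-connected snarks. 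By minimality each $\tilde G_i$ then possesses a dominating circuit $C_i$, and the remaining task is to manufacture a dominating circuit of $G$ out of $C_1$ and $C_2$.

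The heart of the argument is this gluing step. First I would record how each $C_i$ behaves at the cut, classifying the local interface into finitely many patterns according to which of the four attachment edges $C_i$ traverses and whether the added edge $x_1x_2$ lies on $C_i$. Because $\tilde G_i$ is cubic at the two added vertices, there is genuine freedom to reroute $C_i$ through $x_1,x_2$, and the goal would be to show that after such rerouting one can always bring the patterns of $C_1$ and $C_2$ into a compatible pair that splices along $S$ into a single circuit $C$ of $G$. One must simultaneously check that domination is preserved: every vertex that was covered in $\tilde G_i$ only through $x_1$ or $x_2$ must remain dominated by $C$ after the added vertices are deleted in the $4$-join, which restricts the admissible reroutings.

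The hard part — and the reason the Dominating Circuit Conjecture remains open — is precisely this gluing together with the excluded case $\zeta(G)\ge 5$. The four cut edges give a genuinely two-dimensional family of interface states, and I know of no parity or counting invariant (unlike the Parity Lemma for colourings) that forces the patterns of $C_1$ and $C_2$ to match; rerouting to repair an incompatibility typically strands some previously-dominated vertex, so the cases do not close. Worse, when $\zeta(G)\ge 5$ the decomposition theorems of Section~\ref{sec:decomp} do not apply at all, and an entirely different device — a reducible-configuration analysis with discharging, or a reduction of the cyclically $5$-edge-connected case back to the $4$-connected one — would be needed; assembling a complete unavoidable set of reducible configurations for dominating circuits is exactly the obstruction that has defeated earlier attempts. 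I therefore expect the structural induction to yield at best a conditional reduction (for instance, that a smallest counterexample must have girth $5$ and be cyclically $5$-edge-connected and $4$-join-irreducible), leaving the cyclically $5$-edge-connected core as the open heart of the problem; this is why the paper only \emph{tests} the conjecture computationally on $\mathcal{S}_{36}$ rather than proving it.
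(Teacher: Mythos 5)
You have correctly recognised that the statement is an open conjecture: the paper contains no proof of Conjecture~\ref{conj:doms} and does not attempt one. Its only contribution here is computational --- having generated all of $\mathcal{S}_{36}$, the authors exhaustively tested the snarks of order $36$ and report (as a Claim, not a theorem) that no counterexample exists on $36$ or fewer vertices. So there is no ``paper proof'' for your argument to match, and your concluding paragraph, which identifies the gluing of dominating circuits across a $4$-cut and the cyclically $5$-edge-connected case as the genuine obstructions, is an accurate diagnosis of why the problem is open. In that sense your proposal is honest: it is a reduction sketch, not a proof, and should not be read as more.

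That said, even the conditional reduction you hope to salvage has a concrete flaw beyond the admitted gluing gap: the induction is not well-founded as you set it up. Theorem~\ref{thm:4decomp-detailed} is proved only under the hypotheses that $G$ has oddness at least $4$, cyclic connectivity $4$, \emph{and} minimum order among such snarks --- none of which a minimal counterexample to Conjecture~\ref{conj:doms} need satisfy --- so the ``snark-specific refinement'' guaranteeing that both extensions are cyclically $4$-edge-connected \emph{snarks} is not available to you. For a general cyclically $4$-edge-connected snark, Proposition~\ref{prop:odd2} only tells you that if both sides of the cut are colourable then $\omega(G)\le 2$; since a counterexample may well have oddness $2$, both components of $G-S$ can be colourable, in which case the extensions supplied by Theorem~\ref{thm:closure} are cyclically $4$-edge-connected cubic graphs but not snarks, and your minimality hypothesis (minimal among snarks) says nothing about them. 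To make the induction close you would need the conjecture in Fleischner's stronger original form --- every cyclically $4$-edge-connected cubic graph has a dominating circuit --- as the inductive statement; that repairs well-foundedness but leaves the interface-matching problem at the cut, for which, as you note, no Parity-Lemma-style invariant is known, and the $\zeta(G)\ge 5$ base case untouched. The net assessment: no proof exists here, yours included, and the paper claims only the finite verification on $\mathcal{S}_{36}$.
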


The dominating circuit conjecture exists in several different
forms (see, for example, \cite{AJ, FK}) and is equivalent to a
number of other seemingly unrelated conjectures such as the
Matthews-Sumner conjecture about the hamiltonicity of claw-free
graphs \cite{MaSu}. For more information on these conjectures
see \cite{Broe+}.

Our tests have resulted in the following claim.

\begin{claim}
Conjecture~\ref{conj:doms} has no counterexample on $36$ or
fewer vertices.
\end{claim}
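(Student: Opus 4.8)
The plan is to verify the claim by exhaustive computation over the complete catalogue $\mathcal{S}_{36}$ of all cyclically $4$-edge-connected snarks of order at most $36$, whose determination was completed earlier in this paper. A counterexample to Conjecture~\ref{conj:doms} on at most $36$ vertices would be precisely a member of $\mathcal{S}_{36}$ admitting no dominating circuit, so it suffices to confirm that every graph in $\mathcal{S}_{36}$ possesses one. Since Brinkmann et al.~\cite{BrGHM} have already carried out this test on $\mathcal{S}_{34}$ and on the snarks in $\mathcal{S}_{36}$ of girth at least $5$, the only graphs that remain to be examined are the cyclically $4$-edge-connected snarks of order exactly $36$ that contain a $4$-cycle -- exactly the graphs newly generated above. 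Hence the task reduces to deciding, for each such graph, whether a dominating circuit exists.

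For the per-graph test I would exploit the following reformulation: in a cubic graph $G$, a circuit $C$ is dominating if and only if the vertex set $V(G)\setminus V(C)$ is independent, because any edge joining two vertices off $C$ would have no end on $C$. Thus searching for a dominating circuit amounts to searching for a circuit whose complementary vertex set is independent. I would implement this as a backtracking search that grows a circuit edge by edge while maintaining the invariant that the still-unreached vertices can still be covered in the required sense, pruning a branch as soon as two uncovered vertices become mutually adjacent with no remaining way to place one of them on the circuit. The cubicity and the modest order ($\le 36$) keep the branching controlled, so each individual instance is decided quickly.

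The main obstacle will be sheer scale rather than conceptual difficulty: deciding the existence of a dominating circuit is NP-hard in general, and $\mathcal{S}_{36}$ contains more than $4\times 10^{8}$ graphs, so the per-instance routine must be both correct and extremely fast, and the whole computation must be parallelised. To guard against implementation errors -- the chief risk in any computer-assisted claim -- I would run two independently written search programs and require their verdicts to agree on every graph, mirroring the redundancy used for the oddness computations in the proof of Theorem~\ref{thm:main}. If every graph in $\mathcal{S}_{36}$ is certified to possess a dominating circuit, the claim follows; the appearance of even a single graph without one would instead furnish a smallest counterexample, so the same computation simultaneously establishes the claim and would detect its failure.
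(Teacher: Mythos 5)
Your proposal matches the paper's approach: the paper likewise establishes this claim by exhaustive computational testing of the cyclically $4$-edge-connected snarks of order $36$ (the newly generated set $\mathcal{S}_{36}\setminus\mathcal{S}_{34}$), relying on the earlier tests of Brinkmann et al.~\cite{BrGHM} for $\mathcal{S}_{34}$ and the girth-$5$ snarks of order $36$. Your reduction of the per-graph test to finding a circuit whose complementary vertex set is independent is correct, and the added detail about redundant independent implementations only strengthens the same computational argument.
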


\medskip

The \textit{total chromatic number} of a graph $G$ is the
minimum number of colours required to colour the vertices and
the edges of $G$ in such a way that adjacent vertices and edges
have different colours and no vertex has the same colour as its
incident edges. The total colouring conjecture~\cite{BCC, V}
suggests that the total chromatic number of every graph with
maximum degree $\Delta$ is either $\Delta+1$ or $\Delta +2$.
For cubic graphs this conjecture is known to be true by a
result of Rosenfeld~\cite{R71}, therefore the total chromatic
number of a cubic graph is either 4 or 5. Cavicchioli et al.
\cite[Problem 5.1]{C03} asked for a smallest nontrivial snark
with total chromatic number~5. Brinkmann et al.~\cite{BrGHM}
showed that such a snark must have at least $38$ vertices.
Sasaki et al.~\cite{SDFP} displayed examples of snarks with
connectivity $2$ or $3$ whose total chromatic number is $5$ and
asked~\cite[Question~2]{SDFP} for the order of a smallest
cyclically $4$-edge-connected snark with total chromatic number
5. Brinkmann et al.~\cite{BrPS} constructed cyclically
$4$-edge-connected snarks with girth $4$ and total chromatic
number $5$ for each even order greater than or equal to~$40$.
Our next claim shows that the value asked for by Sasaki et al.\
is either 38 or 40.

\begin{claim}
All cyclically $4$-edge-connected snarks with at most $36$
vertices have total chromatic number~$4$.
\end{claim}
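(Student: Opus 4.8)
The plan is to exploit the theorem of Rosenfeld~\cite{R71} cited above: since the total chromatic number of every cubic graph is either $4$ or $5$, the claim is equivalent to the assertion that every snark in $\mathcal{S}_{36}$ admits a proper total colouring with four colours. It therefore suffices, for each such snark, to exhibit a single total $4$-colouring; no minimality or optimality argument is needed. Moreover, Brinkmann et al.~\cite{BrGHM} have already carried out this verification for all snarks of girth at least $5$ on up to $36$ vertices (in particular for the whole of $\mathcal{S}_{34}$ and the girth-$5$ snarks of order $36$), showing that a cyclically $4$-edge-connected snark of girth at least $5$ with total chromatic number $5$ must have at least $38$ vertices. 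Hence the genuinely new work concerns precisely the snarks of order $36$ that contain a $4$-cycle -- exactly the graphs that complete $\mathcal{S}_{36}$ and that we generated in the course of proving Theorem~\ref{thm:main}.

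For the actual test I would use the following reformulation, which makes the per-graph computation efficient. In a cubic graph a total colouring with the four colours $\{1,2,3,4\}$ forces the colour of each vertex $v$ together with its three incident edges to be a permutation of $\{1,2,3,4\}$; consequently the three edges at $v$ receive distinct colours and the colour of $v$ is the unique colour absent from those three edges. A total $4$-colouring is thus nothing but a proper $4$-edge-colouring whose induced ``missing-colour'' map on the vertices is a proper vertex colouring, that is, adjacent vertices must miss different colours. I would encode this condition for each input graph as a constraint-satisfaction instance (or hand it to a SAT solver) and search for a satisfying assignment by backtracking, recording an explicit colouring as a certificate whenever one is found. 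Because such a certificate can be checked in linear time, the correctness of a positive answer for any individual graph is easy to audit, and I would re-run the whole test with two independent implementations to guard against programming errors, exactly as was done for the oddness computations in the proof of Theorem~\ref{thm:main}.

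The main obstacle is neither conceptual nor per-instance difficulty but sheer scale: $\mathcal{S}_{36}$ contains more than $4\times 10^{8}$ graphs, and although deciding total $4$-colourability is NP-hard in general, the vast majority of cubic graphs are of Type~$1$ and yield a colouring almost immediately under backtracking. The only genuinely dangerous graphs would be any that turned out to be Type~$2$, since for those one would have to certify the \emph{non-existence} of a total $4$-colouring by an exhaustive search; I expect none to occur, and the whole point of the computation is to confirm this. Finding that every graph in $\mathcal{S}_{36}$ admits a total $4$-colouring then yields, together with Rosenfeld's theorem, that all of them have total chromatic number exactly $4$, establishing the claim and showing in particular that the smallest cyclically $4$-edge-connected snark with total chromatic number $5$ has order $38$ or $40$.
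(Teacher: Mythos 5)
Your proposal is correct and takes essentially the same approach as the paper: the claim is established there purely by a computational check over the newly completed set $\mathcal{S}_{36}$, verifying that every such snark admits a total $4$-colouring, with Rosenfeld's theorem guaranteeing the total chromatic number of a cubic graph is $4$ or $5$. Your reformulation as a proper $4$-edge-colouring whose induced missing-colour map is a proper vertex colouring, and your use of the earlier verification by Brinkmann et al.\ to focus on the order-$36$ snarks containing a $4$-cycle, are sound implementation details of that same computational argument.
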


\medskip

The following conjecture was made by Jaeger~\cite{J88} and is
known as the \textit{Petersen colouring conjecture}. If true,
this conjecture would imply several other profound
conjectures, in particular, the $5$-cycle double cover
conjecture and the Fulkerson conjecture.

\begin{conjecture}[{\rm Petersen colouring conjecture}]
\label{conj:petersen_col}
Every bridgeless cubic graph $G$ admits a colouring of its
edges using the edges of the Petersen graph as colours in such
a way that any three mutually adjacent edges of $G$ are
coloured with three mutually adjacent edges of the Petersen
graph.
\end{conjecture}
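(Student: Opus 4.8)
The plan is to attack the conjecture through its standard reformulation in terms of normal edge-colourings, due to Jaeger: a bridgeless cubic graph admits a Petersen colouring if and only if it admits a \emph{normal $5$-edge-colouring}, that is, a proper $5$-edge-colouring in which every edge $e=uv$ is either \emph{poor} (the five edges consisting of $e$ and its four neighbours carry exactly three colours) or \emph{rich} (they carry all five). I would adopt this formulation from the outset, because a normal colouring lives entirely on $G$ and interacts well with the colouring machinery of Section~\ref{sec:prelim}, whereas the definition via maps into the Petersen graph is awkward to induct on. The first reduction is then immediate: any proper $3$-edge-colouring is a normal colouring in which every edge is poor, so every $3$-edge-colourable cubic graph is Petersen-colourable and a minimal counterexample must be uncolourable, hence a snark in our sense.

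Next I would strip away the trivial structure exactly as in the proof of Theorem~\ref{thm:main}. Using Proposition~\ref{prop:girth} together with the edge-cut reductions following it, one shows that a counterexample of minimum order has no bridge and no $2$- or $3$-edge-cut, and (imitating the girth argument of Section~\ref{sec:MainResult}) that it is cyclically $5$-edge-connected, whence $g(G)\ge 5$ automatically by Proposition~\ref{prop:girth}. The substantive step is to prove that Petersen colourability is preserved under the $4$-join, so that every cycle-separating $4$-edge-cut in a minimal counterexample is excluded and the problem reduces to the cyclically $5$-edge-connected case. Concretely, for a cut with sides $H_1,H_2$ I would refine $\Col(M)$ to a \emph{normal colouring set} recording, for each of the finitely many colour patterns on the four semiedges, which ones extend to a normal $5$-colouring of the $4$-pole $M$ that is poor or rich at every internal edge. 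The target is a matching lemma in the spirit of Propositions~\ref{prop:iso}--\ref{prop:unique2}: for any two $4$-poles arising from normally colourable cubic graphs, some pattern on the cut is shared, so the two sides glue to a global normal colouring. This five-colour analysis is genuinely richer than the $3$-colouring/Parity-Lemma analysis used here, and would be the main combinatorial content of the argument.

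The hard part, however, is not this gluing but the base case it leaves behind, and this is precisely why the conjecture is open. The decomposition above reduces everything to cyclically $5$-edge-connected snarks, and for that family there is at present \emph{no} structural handle: unlike the connectivity-$4$ situation resolved in this paper, there is no decomposition theorem at connectivity $5$, and even the finiteness of the relevant family is governed by the still-open Jaeger--Swart conjecture that no cyclically $7$-edge-connected snark exists. Thus the induction cannot currently be closed, and no present technique delivers the base case. For this reason the strongest rigorous statement obtainable at the moment is the verification that the conjecture admits no counterexample among the cyclically $4$-edge-connected snarks of order at most $36$, which is what we record by testing every member of $\mathcal{S}_{36}$ below; the full conjecture~\ref{conj:petersen_col} remains beyond reach.
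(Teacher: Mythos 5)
You have not produced a proof, and none should be expected: the statement is the Petersen colouring conjecture, which is open, and the paper itself offers no proof of it. What the paper actually establishes is only the computational fact recorded in its Claim, namely that Conjecture~\ref{conj:petersen_col} has no counterexample on $36$ or fewer vertices, obtained by observing that a smallest counterexample must be a cyclically $4$-edge-connected snark and then testing every graph in $\mathcal{S}_{36}$ (thereby improving the earlier bound of $36$ from Brinkmann et al.\ to $38$). Your final paragraph is therefore exactly right, and its rigorous core coincides with the paper's contribution: the preliminary reductions (Jaeger's equivalence with normal $5$-edge-colourings, the observation that a proper $3$-edge-colouring is an all-poor normal colouring, hence a minimal counterexample is a snark) are correct and standard, and you correctly concede that the induction cannot be closed.

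The one place where your sketch contains a concrete technical flaw, beyond mere incompleteness, is the proposed gluing step across a cycle-separating $4$-edge-cut. The poor/rich condition is not local to a $4$-pole: for an edge of $H_1$ incident with, or at distance one from, the cut, the five-edge neighbourhood that decides whether it is poor or rich contains edges of $H_2$. Hence a ``normal colouring set'' indexed only by the colour patterns on the four semiedges, as you propose in analogy with $\Col(M)$, does not carry enough information; one would at least have to record the colours of all edges within distance two of the cut on each side, and the hoped-for matching lemma in the spirit of Propositions~\ref{prop:iso}--\ref{prop:unique2} is then far from a finite pigeonhole over four patterns. Moreover, if such a lemma held, every minimal counterexample to Conjecture~\ref{conj:petersen_col} would be cyclically $5$-edge-connected, a statement that is not known; so the step you label ``the main combinatorial content'' is not merely unfinished but would require settling a question that is itself open. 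None of this affects your conclusion, which agrees with the paper: the only currently attainable result is the exhaustive verification on $\mathcal{S}_{36}$.
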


It is easy to see that the smallest counterexample to this
conjecture must be a cyclically $4$-edge-connected snark.
Brinkmann et al.~\cite{BrGHM} showed that the smallest
counterexample to the Petersen colouring conjecture must have
order at least $36$. Here we improve the latter value to $38$.

\begin{claim}
Conjecture~\ref{conj:petersen_col} has no counterexamples on
$36$ or fewer vertices.
\end{claim}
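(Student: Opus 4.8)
The plan is to reduce the claim to a finite computation over the set $\mathcal{S}_{36}$ and then carry that computation out. First I would invoke the reduction noted immediately before the statement: any smallest counterexample to Conjecture~\ref{conj:petersen_col} is a cyclically $4$-edge-connected snark. Indeed, a $3$-edge-colourable cubic graph trivially receives a Petersen colouring — fix a vertex $p_0$ of the Petersen graph $P$ with incident edges $a$, $b$, $c$ and send every edge coloured $i$ to the corresponding edge among $a,b,c$ — and any counterexample with a bridge or a cycle-separating cut of size at most $3$ can be split along that cut and recombined into a smaller counterexample. Hence it suffices to verify that every graph in $\mathcal{S}_{36}$, the complete list of cyclically $4$-edge-connected snarks of order at most $36$ assembled in Section~\ref{sec:MainResult}, admits a Petersen colouring.

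Second, I would isolate which graphs actually remain to be tested. By Brinkmann et al.~\cite{BrGHM}, every snark in $\mathcal{S}_{34}$ and every cyclically $4$-edge-connected snark of order $36$ and girth at least $5$ is already known to admit a Petersen colouring; this is exactly their lower bound of $36$. The genuinely new cases are therefore the cyclically $4$-edge-connected snarks on precisely $36$ vertices that contain a $4$-cycle, a family that became available only through the generation performed for Theorem~\ref{thm:main}. Testing these, and as a safeguard re-testing the rest of $\mathcal{S}_{36}$, is where the present improvement comes from.

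Third, I would phrase Petersen-colourability as a constraint-satisfaction problem and solve it by backtracking. For a given snark $G$ I would search for a map from $E(G)$ to the fifteen edges of $P$ such that at every vertex $v$ the three edges incident with $v$ are sent to three edges of $P$ sharing a common vertex. Since $G$ is triangle-free (its girth is at least $4$), ``three mutually adjacent edges'' is in both $G$ and $P$ the same as a claw, so the vertex constraints are local and propagate well; a depth-first search with forced-colour propagation decides each graph rapidly, and any colouring it returns is trivially verifiable. Finding such a colouring for every graph in $\mathcal{S}_{36}$ would prove the claim and raise the known bound from $36$ to $38$.

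The chief difficulty is not mathematical but computational: $\mathcal{S}_{36}$ contains more than $4.3\times 10^{8}$ graphs, so the per-graph search must be efficient and the isomorph-free list from Section~\ref{sec:MainResult} must be exploited to avoid redundant work. In keeping with the methodology used elsewhere in the paper, I would run two independent implementations of the test and confirm that their verdicts agree on every graph before drawing the conclusion.
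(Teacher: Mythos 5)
Your proposal takes essentially the same route as the paper: the paper likewise reduces the claim to cyclically $4$-edge-connected snarks (dismissed there with ``it is easy to see'') and then verifies Petersen-colourability by computer on the completed list $\mathcal{S}_{36}$, whose only genuinely new members beyond the tests of Brinkmann et al.\ are the $36$-vertex cyclically $4$-edge-connected snarks containing a $4$-cycle generated for Theorem~\ref{thm:main}. Your additional details --- the explicit Petersen colouring of $3$-edge-colourable cubic graphs, the backtracking search, and the two independent implementations --- are consistent with, and merely more explicit than, what the paper reports.
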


\subsection*{Acknowledgements}
The first author was supported by a Postdoctoral Fellowship of
the Research Foundation Flanders (FWO). The second and the
third author were partially supported by VEGA 1/0876/16 and by
APVV-15-0220. Most computations for this work were carried out
using the Stevin Supercomputer Infrastructure at Ghent
University.

\end{document}